\documentclass[11pt,a4paper]{article}
\usepackage[T1]{fontenc}
\usepackage[utf8]{inputenc}
\usepackage{lmodern}
\usepackage[a4paper,margin=26mm,headheight=15pt]{geometry}
\usepackage{amsmath,amssymb,amsthm,mathtools,mathrsfs}
\usepackage{microtype,needspace,etoolbox}
\usepackage{booktabs,array,longtable}
\usepackage{hyperref}
\hypersetup{colorlinks=true,linkcolor=blue,citecolor=blue,urlcolor=blue,
 pdftitle={A Uniform Divisor-Comparison Method for Meromorphic Identities},
 pdfsubject={Fixed Weierstrass coordinates, uniform divisor-comparison proofs, and historical motivation},
 pdfauthor={Henning Wunderlich},bookmarksnumbered=true}
\numberwithin{equation}{section}
\newtheorem{theorem}{Theorem}[section]
\newtheorem{lemma}[theorem]{Lemma}
\newtheorem{proposition}[theorem]{Proposition}

\theoremstyle{definition}

\theoremstyle{remark}

\newtheorem*{normalization}{Normalization rule}
\newtheorem*{dctone}{DCT 1 --- General comparison}
\newtheorem*{dcttwo}{DCT 2 --- Finite-order comparison}
\newtheorem*{dctparity}{DCT 2(P) --- Common parity}
\newtheorem*{dctrecurrence}{DCT 2(R) --- Common recurrence}
\newtheorem*{dctthree}{DCT 3 --- Matching lattice transformations}
\newcommand{\C}{\mathbb C}
\newcommand{\Z}{\mathbb Z}

\newcommand{\OO}{\mathcal O(\C)}
\newcommand{\MM}{\mathcal M(\C)}
\newcommand{\DD}{\operatorname{Div}_{\mathrm{lf}}(\C)}
\newcommand{\HH}{\mathcal H}
\newcommand{\TT}{\mathscr T}
\newcommand{\KK}{\mathcal K}
\newcommand{\ii}{\mathrm i}
\newcommand{\dd}{\,\mathrm d}
\newcommand{\zero}{\mathbf 0}
\DeclareMathOperator{\Div}{div}
\DeclareMathOperator{\ord}{ord}

\newcommand{\lc}[2]{\operatorname{lc}_{#1}(#2)}

\newcommand{\proto}[1]{\begin{quote}\small #1\end{quote}}
\newcommand{\route}[1]{\par\smallskip\noindent\textit{DCT route.}\ #1\par\smallskip}
\newcommand{\step}[2]{\par\addvspace{5pt}\Needspace{3\baselineskip}
 \noindent\textbf{#1.\ #2.}\par\nobreak}
\newcommand{\represent}{\step{1}{Represent the two sides}}
\newcommand{\compare}{\step{2}{Compare the divisors}}
\newcommand{\applydct}{\step{3}{Apply DCT}}
\newcommand{\normalize}{\step{4}{Normalize the logarithmic coordinate}}
\newcommand{\finish}{\[ [h_s-h_t]=[0],\qquad s=t,\qquad f=q_s=q_t=g. \]}
\newenvironment{comparisonproof}{\par}{\par\hfill$\square$\par\medskip}
\pretocmd{\section}{\Needspace{8\baselineskip}}{}{}
\pretocmd{\subsection}{\Needspace{9\baselineskip}}{}{}
\BeforeBeginEnvironment{theorem}{\Needspace{6\baselineskip}}
\BeforeBeginEnvironment{lemma}{\Needspace{5\baselineskip}}
\BeforeBeginEnvironment{proposition}{\Needspace{5\baselineskip}}

\begin{document}
\title{A Uniform Divisor-Comparison Method for Meromorphic Identities}
\author{Henning Wunderlich\thanks{Offenbach am Main, Germany. Email: \texttt{henningwunderlich@t-online.de}}}
\date{}
\maketitle

\begin{abstract}
We present a uniform method for proving meromorphic function identities by
comparing complete zero and pole multisets. A fixed annular Weierstrass
normalization represents every nonzero meromorphic function uniquely as
\[
q_t=e^{h_t}\frac{p_{Z_t}}{p_{P_t}},\qquad
 t=([h_t],Z_t,P_t),\qquad [h_t]\in\mathcal O(\mathbb C)/(2\pi i\mathbb Z).
\]
Thirteen principal comparisons, together with their specializations and
consequences, use the same four steps: represent the two sides, compare
their divisors, apply a divisor-comparison theorem (DCT), and normalize
$h_s-h_t$. A small library packages classical finite-order, parity,
recurrence, and lattice-rigidity arguments for repeated use. The examples
span sine, Gamma, Barnes $G$, Bessel, completed zeta, theta, and elliptic
functions. Their breadth demonstrates reuse of one proof structure across
these families; the family-specific hypotheses and scalar calculations
remain explicit. The contribution is a methodological synthesis: shorter
proofs where available, and otherwise a modular organization of classical
arguments. The fixed factors supply common coordinates, rather than a new
uniqueness hypothesis. Historically, the viewpoint is motivated by the
nineteenth-century contrast between Riemann's global geometric approach to
complex function theory and Weierstrass's analytic construction of functions;
the present
method deliberately combines global divisor data with fixed Weierstrass
factors. An appendix develops the associated ring and field presentations,
separately from the ordinary operations used in the proofs.
\end{abstract}

\noindent\textbf{Keywords.} Weierstrass products; divisors; meromorphic functions; Gamma function; Barnes $G$-function; Bessel functions; Riemann xi-function; theta functions; elliptic functions.

\medskip
\section{Introduction and proof protocol}
\label{sec:purpose}
Many identities in complex analysis can be approached by the same question:
after their zeros and poles have been matched, what freedom remains?
The purpose of this article is to make that question the working method,
not merely a preliminary observation. We fix one product convention,
state a small library of divisor-comparison theorems (DCTs), and use the
same proof format throughout a substantial collection of identities.

Every nonzero meromorphic function is written in reduced coordinates
\[
 t=([h_t],Z_t,P_t),\qquad q_t=e^{h_t}\frac{p_{Z_t}}{p_{P_t}}.
\]
The product $p_D$ is prescribed for every locally finite multiset $D$ by
one annular rule. If $f=q_s$ and $g=q_t$ have the same zero and pole
multisets, the common factors cancel and
\begin{equation}\label{eq:central}
 \frac{f}{g}=e^{h_s-h_t}.
\end{equation}
The task is then to determine the single residual coordinate
$H=h_s-h_t$, never to calculate the two logarithmic coordinates separately.

\proto{\textbf{The proof template.}
\begin{align*}
\textbf{1. Represent:}\quad &f=q_s,\quad g=q_t.\\
\textbf{2. Compare:}\quad &Z_s=\cdots=Z_t,\quad P_s=\cdots=P_t.\\
\textbf{3. Apply DCT:}\quad &H:=h_s-h_t\text{ is entire, polynomial, or constant}.\\
\textbf{4. Normalize:}\quad &[H]=[0],\quad s=t,\quad f=g.
\end{align*}}

\paragraph{The contribution: a reusable proof organization.}
The central claim is methodological. Four comparisons use one parity DCT;
Gamma and Barnes multiplication use one recurrence DCT; theta and elliptic
identities use one lattice DCT. The reciprocal Gamma product uses the
underlying finite-order theorem and a two-coefficient normalization.
Each application visibly retains the four steps, including the final
coordinate equality. Their repetition is the point: after learning the
DCTs, the reader need only check the new divisor data, the stated analytic
hypotheses, and the normalization. The same format can be used in lectures
without presenting every function family as a new collection of tricks.

\paragraph{Why retain the full collection.}
The thirteen principal comparisons, including the separately displayed
Legendre specialization, and all their further consequences form the
main demonstration of reuse. They include simple and weighted zeros,
meromorphic pole sets, symmetric products, and additive and multiplicative
lattices. Agreement of the proof structure across these different
settings is the central demonstration. This is a
statement about the cases proved here, not a claim that any identity can be
settled from divisor data alone.

\paragraph{What the fixed factors add.}
The annular convention makes equal divisor coordinates produce literally
the same $p_Z/p_P$ in every application. There is no new choice of
convergence factors to reconcile each time the function family changes.
It is an explicit universal realization of the coordinates, not an
additional source of uniqueness: another fixed valid normalization would
support the same DCTs. The particular ceiling formula supplies a
convergence guarantee for every locally finite multiset; its value here
is a definite common convention. The DCT hypotheses and the last
normalization determine the residual unit.

\paragraph{Two meanings of simplification.}
A proof may be shorter once the general theorems are available.
Alternatively, it may become easier to organize, teach, or check because
the same argument is reused. We use \emph{simplification} in both senses,
without claiming to improve on the shortest known proof of every formula.
The independent zero, growth, and transformation lemmas are stated in
Appendix~\ref{app:inputs}; the nontrivial scalar computations are in
Appendix~\ref{app:scalars}. Moving these facts to reusable lemmas saves
repetition, not their mathematical content.

\paragraph{Complete divisors, not discrete sampling.}
Equality of divisors includes multiplicities and the absence of additional
zeros or poles. It is not agreement of values on a discrete set.
For unrestricted meromorphic functions, multiplication by $e^H$ preserves
the divisor for every entire $H$. The conclusion $f=g$ therefore uses all
three ingredients: complete divisor data, the DCT hypotheses, and the
prescribed normalization. Within such a normalized analytic class, the
divisor does determine the function.

\subsection{Historical motivation: Riemann's geometric approach and
Weierstrass's analytic construction}
\label{sec:riemann-history}
The method used here is motivated by a broad nineteenth-century change in
how complex functions could be understood.  Bottazzini and Gray devote
Chapter~5 of \emph{Hidden Harmony---Geometric Fantasies} to
``Riemann's Geometric Function Theory.''  Their account emphasizes
Riemann's global geometric approach to complex function theory, in which
Riemann surfaces, conformal mapping, Abelian integrals, branch points,
coverings, and related global analytic data became central organizing
structures, rather than merely consequences of a previously fixed
power-series representation \cite[Ch.~5]{BottazziniGray}.
This should not be read as the historical claim that Riemann formulated the
specific divisor-comparison theorems of this article.  Zeros and poles are
only one part of the broader Riemannian picture, which also includes branch
points, coverings, and monodromy.

The contrast with Weierstrass is especially useful for interpreting the
present construction.  Bottazzini documents how, partly in response to
Riemann's geometric methods, Weierstrass developed a systematic theory of
analytic functions on arithmetical and power-series foundations, and how
this methodological opposition remained influential well beyond their own
work \cite{BottazziniResponse}.  Ferreir\'os places Riemann's mathematics in
a still broader conceptual setting, emphasizing the interaction of
mathematics, geometry, physics, and philosophical questions in his style of
research \cite{FerreirosRiemann}.

The present paper deliberately combines the two traditions rather than
choosing between them.  Its \emph{data-first} step is Riemann-like in spirit:
for an identity $f=g$, one first compares global discrete data---the complete
zero and pole multisets---before manipulating explicit formulas.  Its
implementation is Weierstrassian: the prescribed factors $p_D$ give a
rigorous analytic representative for every locally finite divisor.  Once
these common factors have been fixed, a DCT identifies the residual
holomorphic freedom, and normalization removes it.  Thus the historical
motivation is a structural analogy and a synthesis of viewpoints, not an
attribution of the modern divisor formalism or the present four-step
protocol to Riemann himself.

\subsection{Classical context and related approaches}
\label{sec:context}
The underlying results are classical. Hadamard factorization and its
same-zero uniqueness consequence appear, for example, in McMullen
\cite[Theorem~3.14 and Corollary~3.15]{McMullen}. His proof of the
Weierstrass cubic already compares the complete zeros and poles and then
the leading Laurent coefficient \cite[Theorem~5.4]{McMullen}. Robbin's
notes place divisors alongside principal parts on Riemann surfaces
\cite[\S11]{Robbin}. These are direct antecedents of the comparison
arguments used here, not merely background on the functions involved.

A closely related modern treatment is P\'erez-Marco's characterization of
Euler's Gamma function within finite-order meromorphic functions
\cite{PerezMarcoGamma}, extended to higher Gamma functions in
\cite{PerezMarcoHigher}. In particular, the uniqueness argument in
\cite[\S1, Theorem~1.3 and its proof]{PerezMarcoHigher} reduces the ratio
of two candidates to an exponential polynomial, uses their recurrence,
and removes the remaining ambiguity by a real normalization. DCT~2(R)
packages this familiar mechanism for repeated comparison of already
defined functions. The present collection applies a single coordinate
convention and a fixed four-step format beyond the Gamma hierarchy, also
to parity-based and lattice-based identities. The formulas and standard
conventions are referenced individually, principally to the DLMF.

The article is consequently an expository and methodological synthesis,
not a claim of novelty for divisor uniqueness or for the classical
identities. The historical discussion likewise supplies motivation rather
than a claim that Riemann stated the modern divisor quotient or the DCTs in
this form. No bibliographical priority for the exact annular rule is
asserted. ``Canonical'' always means relative to the chosen coordinate,
origin, shells, and genus rule.

\paragraph{Organization and the algebraic companion.}
Sections~\ref{sec:representation}--\ref{sec:dct} give the coordinates and
DCTs; Section~\ref{sec:map} records their reuse across the full collection.
Sections~\ref{sec:parity-apps}--\ref{sec:lattice-apps} contain the thirteen
comparisons, and Section~\ref{sec:consequences} retains the derived
identities. Appendices~\ref{app:inputs}--\ref{app:conversion} supply
analytic input, scalar evaluations, and product conversions.
Appendix~\ref{app:algebra} develops the complete ring and field
presentations associated with the same coordinates. It is an algebraic
companion, not a prerequisite: all identity proofs use ordinary
meromorphic addition and multiplication under $\Phi(t)=q_t$.

\section{Fixed annular Weierstrass coordinates}
\label{sec:representation}
A discrete multiset $D$ is a locally finite sum
$D=\sum_a m_D(a)[a]$, with $m_D(a)\in\Z_{\geq0}$; multiplicities are finite.
Every compact set contains only finitely many points counted with
multiplicity.  Write $D_k$ for the part with $k\leq |a|<k+1$ and
$N_k(D)=\#D_k$.  Multiset union $\sqcup$ adds multiplicities.

For $p\geq0$, set
\[
 H_p(w)=\sum_{j=1}^{p}\frac{w^j}{j},\qquad H_0=0,
 \qquad E_p(w)=(1-w)e^{H_p(w)}.
\]
For $k\geq2$ and $N_k(D)>0$, fix exactly the rule
\begin{equation}\label{eq:genus}
 \boxed{p_k(D)=\left\lceil
 \frac{\log N_k(D)+2\log k}{\log k-\log\log(k+1)}
 \right\rceil-1.}
\end{equation}
All real logarithms here are natural.  Empty shells supply no factors.
Define
\begin{equation}\label{eq:annular}
 \boxed{p_D(z)=z^{m_D(0)}
 \prod_{\substack{a\in D\\0<|a|<2}}\left(1-\frac za\right)
 \prod_{k=2}^{\infty}\prod_{a\in D_k}
 E_{p_k(D)}(z/a),\qquad p_\varnothing=1.}
\end{equation}
The origin is extracted separately, and the finitely many remaining
small-shell factors have genus zero.

\begin{proposition}[The fixed product and its coordinates]\label{prop:coordinates}
The product \eqref{eq:annular} is entire, has exactly the zero multiset
$D$, and satisfies $z^{-m_D(0)}p_D(z)\to1$ at zero.  Every nonzero
meromorphic function on $\C$ has a unique reduced representation
\begin{equation}\label{eq:qt}
 q_t(z)=e^{h_t(z)}\frac{p_{Z_t}(z)}{p_{P_t}(z)},\qquad
 t=([h_t],Z_t,P_t),\quad Z_t\cap P_t=\varnothing,
\end{equation}
where $[h_t]\in\HH:=\OO/(2\pi\ii\Z)$, and $Z_t,P_t$ are its actual zeros
and poles.  Adjoin a separate symbol $\zero$ for the zero function.
\end{proposition}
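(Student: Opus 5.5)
Our plan is to show normal convergence of the product on compact sets via the standard estimate $|1-E_p(w)|\le |w|^{p+1}$ for $|w|\le1$. The ceiling rule \eqref{eq:genus} is designed exactly so that each shell's total contribution is summable.

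Fix $R>0$ and $|z|\le R$. Only finitely many shells have $k<2R$, and those contribute finitely many entire factors. For $k\ge \max(2,2R)$ and $a\in D_k$ we have $|z/a|\le R/k\le 1/2$. Writing $p=p_k(D)$, the shell's contribution to $\sum|1-E_p(z/a)|$ is at most $N_k (R/k)^{p+1}$. The rule gives
\[
(p+1)(\log k-\log\log(k+1))\ge \log N_k+2\log k,
\]
that is,
\[
N_k\Bigl(\frac{\log(k+1)}{k}\Bigr)^{p+1}\le k^{-2}.
\]
We then need to compare $(R/k)^{p+1}$ with $(\log(k+1)/k)^{p+1}$. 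Once $k$ is so large that $\log(k+1)\ge R$, the first is bounded by the second, so the shell sum is at most $k^{-2}$, which is summable. This comparison is the main obstacle: the $\log\log$ term in the denominator is exactly what absorbs an arbitrary radius $R$. I would also check that $p_k\ge0$ and that the denominator is positive for $k\ge2$. The latter holds because $\log\log(k+1)<\log k$, and the former because the numerator is at least $2\log 2>0$.

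Uniform convergence of $\sum|1-E|$ on compacts gives an entire limit whose zeros are exactly the zeros of its factors, counted with multiplicity. The factor $E_p(z/a)$ vanishes only at $a$, simply, so the zero multiset is $D$; the origin is handled by $z^{m_D(0)}$. For the limit at zero, every remaining factor equals $1$ at $z=0$, and the product converges locally uniformly and nonvanishingly near $0$, so $z^{-m_D(0)}p_D\to1$.

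For the representation, take a nonzero meromorphic $f$ and let $Z,P$ be its zero and pole multisets. These are locally finite and disjoint. The quotient $F=f\,p_P/p_Z$ has removable singularities and no zeros, so $F$ is entire and zero-free. On the simply connected domain $\C$ it therefore has a logarithm $F=e^h$, with $h$ unique modulo $2\pi\ii\Z$. For uniqueness, suppose $q_s=q_t$. Then the actual zeros and poles coincide, because $p_Z/p_P$ with $Z\cap P=\varnothing$ has exactly zeros $Z$ and poles $P$ with no cancellation. Hence $Z_s=Z_t$ and $P_s=P_t$. The fixed products then agree literally, so $e^{h_s}=e^{h_t}$. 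The difference $h_s-h_t$ is a continuous function with values in $2\pi\ii\Z$ on connected $\C$, hence constant, giving $[h_s]=[h_t]$. The symbol $\zero$ is merely adjoined and needs no proof.
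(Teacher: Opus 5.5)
Your proposal is correct and follows the paper's proof essentially step for step. It uses the same genus-rule inequality $N_k(\log(k+1)/k)^{p_k+1}\le k^{-2}$, the same cutoff $k\ge 2R$, $\log(k+1)\ge R$ that absorbs the radius, and the same entire-unit logarithm argument for existence and uniqueness of the reduced representation. The differences are cosmetic: you use the Weierstrass bound $|1-E_p(w)|\le|w|^{p+1}$ where the paper bounds $|\log E_p(w)|\le 2|w|^{p+1}$ on $|w|\le1/2$, and you explicitly check $p_k\ge0$ and the positivity of the denominator, which the paper leaves implicit.
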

\begin{proof}
Put $r_k=p_k(D)+1$.  The genus rule gives
\[
 N_k(D)\left(\frac{\log(k+1)}k\right)^{r_k}\leq k^{-2}.
\]
For $|w|\leq1/2$, the logarithm vanishing at zero satisfies
\[
 \log E_p(w)=-\sum_{j=p+1}^\infty\frac{w^j}{j},\qquad
 |\log E_p(w)|\leq2|w|^{p+1}.
\]
Fix $R$.  When $k\geq2R$ and $\log(k+1)\geq R$, the logarithmic
contribution of the $k$th shell on $|z|\leq R$ is at most $2/k^2$.
The tail therefore converges normally to a nonvanishing holomorphic
factor.  The finite initial part supplies exactly the prescribed zeros,
and all nonzero-point factors equal $1$ at zero.

For a given meromorphic $f$, local cancellation shows that
$f\,p_{P_f}/p_{Z_f}$ is an entire unit.  Every such unit $u$ has an entire
logarithm: take a primitive of $u'/u$ and choose its constant to obtain
$u=e^h$.  Two logarithms differ by a constant in $2\pi\ii\Z$.
The actual divisor first determines $Z_t,P_t$, and then this logarithm
uniquely determines $[h_t]$.
\end{proof}

\paragraph{A product on either side is treated as a function first.}
If a familiar identity uses genus-one or genus-two products, those are
perfectly legitimate definitions of $f$ or $g$.  Each entire or
meromorphic side is then represented by \emph{the same} rule
\eqref{eq:annular}.  In particular,
\[
 Z_s=Z_t=Z,\quad P_s=P_t=P
 \quad\Longrightarrow\quad
 f=e^{h_s}\frac{p_Z}{p_P},\quad
 g=e^{h_t}\frac{p_Z}{p_P}.
\]
This does not require calculating either logarithmic coordinate.
Appendix~\ref{app:conversion} gives the explicit conversion from a
fixed-genus product when one wants to display it.

\paragraph{No hidden change of genus.}
Even if $f$ has order one, its particular annular $h_t$ need not be linear.
The low-degree assertions below concern only $h_s-h_t$ after the common
annular factors cancel.  They do not silently replace the fixed $p_D$.

\section{Divisor-comparison theorems}
\label{sec:dct}
For the rest of the paper, $f=q_s$ and $g=q_t$ are nonzero functions in
reduced coordinates.  Write $H=h_s-h_t$.  All assertions about $H$ are
unchanged when its representative is shifted by $2\pi\ii n$.

\subsection{DCT 1: cancel the common canonical factors}
\label{sec:dct1}
\begin{dctone}
\[
 \boxed{Z_s=Z_t,\quad P_s=P_t
 \quad\Longleftrightarrow\quad f=e^H g\text{ for an entire }H.}
\]
When the divisor coordinates agree, $H=h_s-h_t$.
\end{dctone}
\begin{proof}
In the forward direction the two copies of $p_Z/p_P$ cancel in
\eqref{eq:qt}.  Conversely, multiplication by $e^H$ changes neither zeros
nor poles.  The implication also holds by direct local cancellation,
independently of a product choice.
\end{proof}

\subsection{DCT 2: finite order, with parity and recurrence rules}
\label{sec:dct2}
For a nonzero entire function $F$, its order is
\[
 \rho(F)=\limsup_{r\to\infty}
 \frac{\log\log\max\{e,M_F(r)\}}{\log r},\qquad
 M_F(r)=\max_{|z|=r}|F(z)|.
\]
For $0\leq\rho<\infty$, let $\mathcal E_\rho$ be the entire functions
of order at most $\rho$ (including $0$), and put
\[
 \KK_\rho=\{A/B:A,B\in\mathcal E_\rho,\ B\not\equiv0\}.
\]
This definition permits meromorphic functions without invoking a new
notion of meromorphic order.  Constants and polynomials have order zero.
Products and sums of entire functions do not increase the maximum of
their orders; hence $\KK_\rho$ is a field.  In particular
$\Gamma\in\KK_1$ because $1/\Gamma$ is entire of order one.

\begin{dcttwo}
If $f,g\in\KK_\rho^\times$ have the same zero and pole multisets, then
\begin{equation}\label{eq:dct2}
 \boxed{H=h_s-h_t\text{ is a polynomial of degree at most }
 \lfloor\rho\rfloor.}
\end{equation}
\end{dcttwo}
\begin{proof}
For entire functions with the same zeros, Hadamard factorization uses the
same minimal-genus product for both functions.  Its polynomial
exponents have degree at most $\lfloor\rho\rfloor$, and subtraction
gives the assertion; see \cite[Theorem 3.14]{McMullen}.
For $f=A/B$ and $g=C/D$, the entire functions $AD$ and $CB$ have order
at most $\rho$ and the same zero multiset, since
$\Div(AD)-\Div(CB)=\Div(f)-\Div(g)=0$.  Apply the entire result to
their quotient $f/g$.  Its entire logarithm differs from $H$ only by
$2\pi\ii n$.
\end{proof}

\begin{dctparity}
Assume DCT 2 with $\rho<2$.  If $f$ and $g$ have the same parity about
a point $c$, that is,
\[
 f(c+w)=\varepsilon f(c-w),\quad
 g(c+w)=\varepsilon g(c-w),\qquad \varepsilon\in\{1,-1\},
\]
then $H$ is constant.  In particular, in
$H(z)=a(z-c)+b$ one has
\begin{equation}\label{eq:parity}
 \boxed{a=0.}
\end{equation}
\end{dctparity}
\begin{proof}
DCT 2 gives degree at most one.  The quotient is even about $c$, so
$e^{aw+b}=e^{-aw+b}$.  Differentiate at $w=0$ to obtain $2ae^b=0$.
More generally, at arbitrary finite order, common parity makes $H$ an
even polynomial about $c$; it need not make it constant.
\end{proof}

\begin{dctrecurrence}
Assume DCT 2 with some finite $\rho$.  Suppose a real $T>0$ and a
nonzero meromorphic multiplier $A$ satisfy
\[
 f(z+T)=A(z)f(z),\qquad g(z+T)=A(z)g(z).
\]
If $f(x)/g(x)>0$ on a nonempty open real interval where the values are
finite and nonzero, then $H$ is constant.
\end{dctrecurrence}
\begin{proof}
Write $H=Q$, a polynomial.  The common recurrence gives
$e^{Q(z+T)-Q(z)}=1$, so $Q(z+T)-Q(z)=2\pi\ii k$ for one integer $k$.
A polynomial with constant first difference is linear: $Q=az+b$ and
$aT=2\pi\ii k$.  Positivity makes $\operatorname{Im}(ax+b)$ constant
modulo $2\pi$ on an interval, so $\operatorname{Im}a=0$.  But $a$ is
purely imaginary by the preceding relation.  Thus $a=0$.
\end{proof}

The polynomial-periodicity argument above is also the uniqueness
mechanism in \cite[\S1]{PerezMarcoHigher}; its role here is a reusable
comparison rule. The positivity hypothesis removes a genuine ambiguity.
A common recurrence alone allows the nonconstant periodic unit
$e^{2\pi\ii kz/T}$.  Likewise, parity alone at order two allows
$e^{az^2}$.

\subsection{DCT 3: the same lattice multipliers}
\label{sec:dct3}
\begin{dctthree}
Suppose $f,g$ have the same zeros and poles.  Let $\lambda_1,\lambda_2$
be linearly independent over $\mathbb R$, and suppose
\[
 f(z+\lambda_j)=J_j(z)f(z),\qquad
 g(z+\lambda_j)=J_j(z)g(z),\qquad j=1,2,
\]
with exactly the same holomorphic nowhere-zero multipliers.  Then
\begin{equation}\label{eq:dct3}
 \boxed{H=h_s-h_t\text{ is constant}.}
\end{equation}
\end{dctthree}
\begin{proof}
DCT 1 makes $f/g=e^H$ an entire unit.  Its two multipliers are $1$,
so it is bounded on a fundamental parallelogram and hence on $\C$.
Liouville's theorem makes it a nonzero constant.  An entire logarithm
of a constant is constant as well.
\end{proof}

Elliptic functions are the case $J_1=J_2=1$.  Equivalently, on any compact
connected Riemann surface, meromorphic functions with the same divisor
have constant quotient.  On $\C^\times$ with a multiplicative period
$0<|q|<1$, lift by $z=e^w$; the periods become $2\pi\ii$ and $\log q$.
The plane representation applies to this lifted divisor, not to a
multiset accumulating at the omitted origin.

\subsection{Normalize the last coordinate}
\label{sec:normalization}
For a nonzero meromorphic $f$, define its leading local coefficient at
$a$ by
\[
 \lc{a}{f}=\lim_{z\to a}(z-a)^{-\ord_a f}f(z)\ne0.
\]
This includes values at ordinary points, derivatives at simple zeros,
and leading Laurent coefficients at poles.

\begin{normalization}
If a DCT gives $H=b$ constant, then at any point $a$,
\begin{equation}\label{eq:normalization}
 e^b=\frac{\lc{a}{f}}{\lc{a}{g}}.
\end{equation}
In particular, one matching nonzero local coefficient proves
$[h_s]=[h_t]$ and $f=g$.  More generally, any one common nonzero linear
coefficient functional gives the same conclusion; this includes a
Laurent or Fourier coefficient.

If DCT 2 gives $\deg H\leq N$, remove the common local power
$(z-a)^m$ and match the Taylor coefficients of the two resulting units
through degree $N$.  These $N+1$ coefficients imply $f=g$.
\end{normalization}
\begin{proof}
The constant case follows from $f=e^bg$ and linearity.  In the polynomial
case write $F=(z-a)^{-m}f$, $G=(z-a)^{-m}g$.  The coefficient conditions
mean $F/G=1+O((z-a)^{N+1})$.  The local logarithm vanishing at $a$ is
$O((z-a)^{N+1})$ and differs from the global polynomial $H$ by a
constant in $2\pi\ii\Z$.  A polynomial of degree at most $N$ with
this vanishing is zero.
\end{proof}

\proto{\textbf{Normalized uniqueness.}
In a class with fixed DCT hypotheses and a fixed admissible
normalization, the complete divisor determines the function:
\[
 (Z_f,P_f)=(Z_g,P_g)\quad\Longleftrightarrow\quad f=g.
\]
For example, normalized even entire functions of order below two are
uniquely determined by their zero multisets.  The analytic class and
normalization are part of the statement, not optional extra conclusions.}

\section{Uniform application protocol}
\label{sec:map}
All constants in a claimed identity are included in $g$ before the
comparison starts. Thus the final task is always $[h_s-h_t]=[0]$.
The complete collection below has thirteen principal comparisons, each with
a full four-step proof. Table~\ref{tab:reuse} shows which general theorem
is reused and which normalization finishes each comparison.

\begin{table}[htbp]
\centering\small
\caption{The same four steps in every principal comparison.}
\label{tab:reuse}
\begin{tabular}{@{}>{\raggedright\arraybackslash}p{.43\linewidth}>{\raggedright\arraybackslash}p{.17\linewidth}>{\raggedright\arraybackslash}p{.32\linewidth}@{}}
\toprule
\textbf{Identity} & \textbf{DCT} & \textbf{Normalization}\\
\midrule
Sine product (\S\ref{ex:sine}) & 2(P) & Leading coefficient at $0$\\[3pt]
Gamma reflection (\S\ref{ex:reflection}) & 2(P) & Residue at $0$\\[3pt]
Bessel products, including $J_0$ (\S\ref{ex:bessel}) & 2(P) & Value at $0$\\[3pt]
Centered $\xi$-product (\S\ref{ex:xi}) & 2(P) & Value at the center\\[3pt]
Reciprocal Gamma product (\S\ref{ex:gamma-product}) & 2 & Two local coefficients\\[3pt]
Gauss multiplication (\S\ref{ex:gauss}) & 2(R) & Residue at $0$\\[3pt]
Legendre duplication (\S\ref{ex:legendre}) & 2(R) & Residue at $0$\\[3pt]
Barnes duplication (\S\ref{ex:barnes}) & 2(R) & Value at $1/2$\\[3pt]
Jacobi triple product (\S\ref{ex:jacobi}) & 3 & One Laurent coefficient\\[3pt]
First theta product (\S\ref{ex:theta}) & 3 & One Fourier coefficient\\[3pt]
Weierstrass cubic (\S\ref{ex:cubic}) & 3 & Leading Laurent coefficient\\[3pt]
Sigma addition (\S\ref{ex:sigma-add}) & 3 & Leading Laurent coefficient\\[3pt]
Sigma duplication (\S\ref{ex:sigma-dup}) & 3 & Leading Taylor coefficient\\
\bottomrule
\end{tabular}
\end{table}

\paragraph{The same obligations in every proof.}
Step~1 must establish genuine nonzero meromorphic functions in the plane
variable, including convergence or a lift from $\C^\times$ when needed.
Step~2 must identify the \emph{entire} zero and pole multisets after
cancellation. Step~3 checks the hypotheses of the cited DCT, rather than
assuming that matching divisors already gives equality. Step~4 exhibits
the normalization required to make the logarithmic class zero.
Family-specific considerations fill these four positions; they do not
replace the proof scheme with an unrelated argument.

\paragraph{Dependencies and the cost of the input.}
Each application cites the relevant lemma in Appendix~\ref{app:inputs}.
The Bessel zero theorem precedes its product; the theta zero count
precedes either series--product identity; and the elliptic half-period
zero count precedes the cubic. The functional equation of $\xi$ is
assumed independently of its zero product. Gamma reflection supplies the
finite scalar used for Gauss multiplication. Legendre duplication is both
its specialization and a separate recurrence-DCT comparison; the Barnes
application then uses Legendre's formula. The Barnes
half-value and the theta coefficient calculation are explicit scalar
lemmas in Appendix~\ref{app:scalars}. These dependencies make the
applications modular without concealing the analytic work.

\clearpage
\section{Common parity: four applications of the same DCT}
\label{sec:parity-apps}
\subsection{Euler's sine product}
\label{ex:sine}
The classical identity is \cite[\S4.22]{DLMFTrig}
\begin{equation}\label{eq:sine}
 \sin\pi z=\pi z\prod_{n=1}^{\infty}\left(1-\frac{z^2}{n^2}\right).
\end{equation}
\route{DCT 2(P): same zeros, order below two, common oddness; one coefficient.}
\begin{comparisonproof}
\represent
Set
\[
 f(z)=\sin\pi z=q_s(z),\qquad
 g(z)=\pi z\prod_{n\geq1}(1-z^2/n^2)=q_t(z),
\]
with $s=([h_s],Z_s,\varnothing)$ and
$t=([h_t],Z_t,\varnothing)$.  Product convergence and order at most one
follow from Lemma~\ref{lem:paired-growth}.
\compare
All zeros are simple, and
\[
 Z_s=\sum_{n\in\Z}[n]=[0]+\sum_{n\geq1}([n]+[-n])=Z_t,
 \qquad P_s=P_t=\varnothing.
\]
Thus both triples use the same $p_{\Z}$ from \eqref{eq:annular}.
\applydct
Both functions are odd and belong to $\KK_1$.  DCT 2(P) gives
$h_s-h_t=b$.
\normalize
At their common simple zero,
\[
 \lc{0}{f}=\pi=\lc{0}{g}.
\]
The normalization rule gives $e^b=1$, hence
\finish
\end{comparisonproof}

\clearpage
\subsection{Euler's Gamma reflection formula}
\label{ex:reflection}
We compare the meromorphic sides directly, retaining their pole data:
\begin{equation}\label{eq:reflection}
 \Gamma(z)\Gamma(1-z)=\frac{\pi}{\sin\pi z}.
\end{equation}
This is the usual reflection identity \cite[\S5.5(ii)]{DLMFGammaRelations}.
\route{DCT 2(P): same poles, finite-order quotients, common symmetry; one residue.}
\begin{comparisonproof}
\represent
Set
\[
 f(z)=\Gamma(z)\Gamma(1-z)=q_s(z),\qquad
 g(z)=\pi/\sin\pi z=q_t(z),
\]
where $s=([h_s],\varnothing,P_s)$ and
$t=([h_t],\varnothing,P_t)$.  The Gamma data are in
Lemma~\ref{lem:gamma-data}.
\compare
There are no zeros.  The pole multisets are
\[
 P_s=\{0,-1,-2,\ldots\}\sqcup\{1,2,3,\ldots\}
     =\Z=P_t,
 \qquad Z_s=Z_t=\varnothing,
\]
all poles simple.  Both canonical denominators are $p_{\Z}$.
\applydct
Both sides lie in $\KK_1$ and are even about $1/2$:
$f(1-z)=f(z)$ and $g(1-z)=g(z)$.  DCT 2(P) gives $h_s-h_t=b$.
\normalize
The recurrence $z\Gamma(z)=\Gamma(1+z)$ and $\Gamma(1)=1$ give
\[
 \lc{0}{f}=\lim_{z\to0}z\Gamma(z)\Gamma(1-z)=1
 =\lim_{z\to0}\frac{\pi z}{\sin\pi z}=\lc{0}{g}.
\]
Consequently $e^b=1$, and
\finish
\end{comparisonproof}
In particular, positivity gives $\Gamma(1/2)=\sqrt\pi$.  No integral is
evaluated inside the four-step proof; the independent analytic facts
about Gamma remain prerequisites.

\clearpage
\subsection{Bessel products, including \texorpdfstring{$J_0$}{J0}}
\label{ex:bessel}
For real $\nu\geq0$, the entire normalized function is
\[
 B_\nu(z)=\sum_{k=0}^{\infty}
 \frac{(-1)^k\Gamma(\nu+1)}{k!\Gamma(k+\nu+1)}(z/2)^{2k}.
\]
Its positive zeros are denoted $j_{\nu,k}$.  The identity is
\begin{equation}\label{eq:bessel}
 B_\nu(z)=\prod_{k=1}^{\infty}(1-z^2/j_{\nu,k}^2).
\end{equation}
It is the normalized form of the classical Bessel product
\cite[\S10.21(iii)]{DLMFBesselZeros}.
\route{DCT 2(P): same paired zeros, order below two, common evenness; one value.}
\begin{comparisonproof}
\represent
Set $f=B_\nu=q_s$ and $g=\prod_{k\geq1}(1-z^2/j_{\nu,k}^2)=q_t$,
with $s=([h_s],Z_s,\varnothing)$ and
$t=([h_t],Z_t,\varnothing)$.
Lemma~\ref{lem:bessel-data} proves the zero classification without this
product, and Lemma~\ref{lem:paired-growth} supplies its convergence and
order bound.
\compare
The zero theorem and the individual product factors give
\[
 Z_s=\sum_{k\geq1}([j_{\nu,k}]+[-j_{\nu,k}])=Z_t,
 \qquad P_s=P_t=\varnothing.
\]
All multiplicities are one; both sides therefore use the same annular
product for this multiset.
\applydct
Both functions are even and belong to $\KK_1$.  DCT 2(P) gives
$h_s-h_t=b$.
\normalize
\[
 \lc{0}{f}=B_\nu(0)=1=g(0)=\lc{0}{g}.
\]
Thus $e^b=1$, and
\finish
\end{comparisonproof}
On a consistent branch this is
\[
 J_\nu(z)=\frac{(z/2)^\nu}{\Gamma(\nu+1)}
             \prod_{k\geq1}(1-z^2/j_{\nu,k}^2).
\]
The case $\nu=0$ requires no branch or prefactor:
\begin{equation}\label{eq:J0}
 J_0(z)=\prod_{k\geq1}(1-z^2/j_{0,k}^2).
\end{equation}

\clearpage
\subsection{The centered product for the completed zeta function}
\label{ex:xi}
Let
\[
 \xi(s)=\tfrac12s(s-1)\pi^{-s/2}\Gamma(s/2)\zeta(s),\qquad
 X(z)=\xi(\tfrac12+z).
\]
The functional equation, entireness, order bound, and nonzero central
value are independent input in Lemma~\ref{lem:xi-data}; see
\cite{DLMFZetaDef,DLMFZetaFE,DLMFZetaZeros}.
Let $\alpha=\rho-1/2$ run over the centered nontrivial zeros with
multiplicity.  The claimed identity is
\begin{equation}\label{eq:xi-centered}
 X(z)=X(0)\prod_{\{\alpha,-\alpha\}}
                      (1-z^2/\alpha^2)^{m(\alpha)},
\end{equation}
where one representative is chosen from each nonzero pair.
\route{DCT 2(P): same centered zeros, order below two, common evenness; one value.}
\begin{comparisonproof}
\represent
Set $f=X=q_s$ and
$g=X(0)\prod_{\{\alpha,-\alpha\}}(1-z^2/\alpha^2)^{m(\alpha)}=q_t$,
with $s=([h_s],Z_s,\varnothing)$ and
$t=([h_t],Z_t,\varnothing)$.  Lemma~\ref{lem:paired-growth} gives
normal convergence and order at most one for $g$.
\compare
The functional equation pairs $\alpha$ with $-\alpha$, and $X(0)\ne0$.
Thus, counting all multiplicities,
\[
 Z_s=\sum_{\{\alpha,-\alpha\}}m(\alpha)([\alpha]+[-\alpha])=Z_t,
 \qquad P_s=P_t=\varnothing.
\]
These are divisors in the centered variable $z$, so the annular products
are formed in that same variable.
\applydct
Both functions are even and belong to $\KK_1$.  DCT 2(P) gives
$h_s-h_t=b$.
\normalize
\[
 \lc{0}{f}=\xi(1/2)=\lc{0}{g}\ne0.
\]
Hence $e^b=1$, and
\finish
\end{comparisonproof}
In the original variable the same identity is
\begin{equation}\label{eq:xi-product}
 \xi(s)=\xi(1/2)\prod_{\{\rho,1-\rho\}}
       \left(1-\frac{(s-1/2)^2}{(\rho-1/2)^2}\right),
\end{equation}
with multiplicities included.  Substitution of the definition of $\xi$
gives the corresponding formula for the completed expression in $\zeta$.
This proof neither assumes the Riemann hypothesis nor proves the
functional equation from zero symmetry derived from that same equation.

\clearpage
\section{Finite order and recurrence: normalize the same coordinate}
\label{sec:recurrence-apps}
\subsection{The reciprocal Gamma product}
\label{ex:gamma-product}
The standard formula \cite[\S5.8]{DLMFGammaProduct} is
\begin{equation}\label{eq:gamma-product}
 \Gamma(z)^{-1}=ze^{\gamma z}
          \prod_{n\geq1}(1+z/n)e^{-z/n}.
\end{equation}
\route{DCT 2: same zeros and order one; two local coefficients.}
\begin{comparisonproof}
\represent
Set
\[
 f(z)=\Gamma(z)^{-1}=q_s(z),\qquad
 g(z)=ze^{\gamma z}\prod_{n\geq1}E_1(-z/n)=q_t(z),
\]
with $s=([h_s],Z_s,\varnothing)$ and
$t=([h_t],Z_t,\varnothing)$.  The product estimate in
Lemma~\ref{lem:gamma-data} gives order at most one for $g$.
\compare
The two simple-zero multisets agree:
\[
 Z_s=\{0,-1,-2,\ldots\}=Z_t,\qquad P_s=P_t=\varnothing.
\]
Both sides are therefore represented with the same prescribed $p_{Z_s}$,
although the displayed classical product uses genus one.
\applydct
Both sides belong to $\KK_1$.  DCT 2 gives $h_s-h_t=az+b$.
\normalize
The independent local Gamma data and the product expansion give
\[
 f(z)=z+\gamma z^2+O(z^3),\qquad
 g(z)=z+\gamma z^2+O(z^3).
\]
After removing the common factor $z$, the two coefficients through degree
one agree.  The polynomial normalization rule yields
\finish
\end{comparisonproof}
When Gamma is instead \emph{defined} by this product, this application
is a coordinate verification, not an independent derivation of the
definition.  Reflection and multiplication are unaffected by that choice.

\clearpage
\subsection{Gauss multiplication}
\label{ex:gauss}
For a positive integer $n$, the identity is
\begin{equation}\label{eq:gauss}
 \prod_{r=0}^{n-1}\Gamma(z+r/n)
  =(2\pi)^{(n-1)/2}n^{1/2-nz}\Gamma(nz).
\end{equation}
Positive-base powers use real logarithms.  This is Gauss's classical
formula \cite[\S5.5(iii)]{DLMFGammaRelations}.
\route{DCT 2(R): same poles, common recurrence and positivity; one residue.}
\begin{comparisonproof}
\represent
Write $c_n(z)=(2\pi)^{(n-1)/2}n^{1/2-nz}$ and set
\[
 f(z)=\prod_{r=0}^{n-1}\Gamma(z+r/n)=q_s(z),\qquad
 g(z)=c_n(z)\Gamma(nz)=q_t(z),
\]
with $s=([h_s],\varnothing,P_s)$ and
$t=([h_t],\varnothing,P_t)$.
\compare
There are no zeros, and unique division $j=n\ell+r$ gives
\[
 P_s=\bigsqcup_{r=0}^{n-1}\{-\ell-r/n:\ell\geq0\}
     =\{-j/n:j\geq0\}=P_t,
 \qquad Z_s=Z_t=\varnothing.
\]
All poles are simple.  The two canonical denominators are literally the
same $p_{P_s}$, using $N_k(P_s)=n$ for $k\geq2$.
\applydct
Both functions are in $\KK_1$ and are positive for real $z>0$.
The Gamma recurrence and $c_n(z+1/n)=c_n(z)/n$ give
\[
 f(z+1/n)=z f(z),\qquad g(z+1/n)=z g(z).
\]
DCT 2(R), with $T=1/n$, therefore gives $h_s-h_t=b$.
\normalize
The finite scalar identity in Lemma~\ref{lem:gamma-scalar}, obtained
from reflection and roots of unity, gives
\[
 \lc{0}{f}=\prod_{r=1}^{n-1}\Gamma(r/n)
  =(2\pi)^{(n-1)/2}n^{-1/2}
  =\frac{c_n(0)}n=\lc{0}{g}.
\]
Thus $e^b=1$, and
\finish
\end{comparisonproof}
There is no separate periodic-limit or Stirling argument here.  The
same recurrence DCT will be used for Barnes duplication.

\clearpage
\subsection{Legendre duplication}
\label{ex:legendre}
For clarity, its short proof is displayed in the same form, even though
it is also the specialization $n=2$ of \eqref{eq:gauss}:
\begin{equation}\label{eq:legendre}
 \Gamma(z)\Gamma(z+1/2)=2^{1-2z}\sqrt\pi\,\Gamma(2z).
\end{equation}
\route{DCT 2(R): the same recurrence argument; the residue is $\sqrt\pi$.}
\begin{comparisonproof}
\represent
Set $f=\Gamma(z)\Gamma(z+1/2)=q_s$ and
$g=2^{1-2z}\sqrt\pi\,\Gamma(2z)=q_t$, with
$s=([h_s],\varnothing,P_s)$ and $t=([h_t],\varnothing,P_t)$.
\compare
\[
 P_s=\{-m:m\geq0\}\sqcup\{-m-1/2:m\geq0\}
     =\{-j/2:j\geq0\}=P_t,
 \qquad Z_s=Z_t=\varnothing.
\]
\applydct
Both sides are in $\KK_1$, positive on the positive real axis, and
satisfy $F(z+1/2)=zF(z)$.  DCT 2(R) gives $h_s-h_t=b$.
\normalize
Both residues at zero are $\sqrt\pi$, using
$\Gamma(1/2)=\sqrt\pi$ from reflection.  Therefore $e^b=1$ and
\finish
\end{comparisonproof}

\clearpage
\subsection{Barnes \texorpdfstring{$G$}{G} duplication}
\label{ex:barnes}
Use the value-normalized identity
\begin{equation}\label{eq:barnes}
 \begin{split}
 G(z)G(z+1/2)^2G(z+1)
  ={}&2^{-2z^2+3z-1}\pi^zG(1/2)^2G(2z).
 \end{split}
\end{equation}
The Barnes input is its standard product and recurrence, not a
multiplication identity \cite[\S5.17]{DLMFBarnes}.
\route{DCT 2(R): same weighted zeros, common recurrence and positivity; one value.}
\begin{comparisonproof}
\represent
Set $f(z)=G(z)G(z+1/2)^2G(z+1)=q_s(z)$ and
\[
 g(z)=2^{-2z^2+3z-1}\pi^zG(1/2)^2G(2z)=q_t(z),
\]
with $s=([h_s],Z_s,\varnothing)$ and
$t=([h_t],Z_t,\varnothing)$.
\compare
At $-m$, the multiplicity of $f$ is $(m+1)+m=2m+1$; at
$-m-1/2$ it is $2(m+1)=2m+2$.  Therefore
\[
 Z_s=\sum_{j\geq0}(j+1)[-j/2]=Z_t,\qquad P_s=P_t=\varnothing.
\]
The common annular product has shell multiplicity $4k+3$ for $k\geq2$.
\applydct
Lemma~\ref{lem:barnes-data} puts both functions in $\KK_2$ and gives
positivity for $z>0$.  The Barnes recurrence and the already proved
Legendre formula yield, for both $F=f$ and $F=g$,
\[
 F(z+1)=A(z)F(z),\qquad
 A(z)=2^{1-4z}\pi\,\Gamma(2z)\Gamma(2z+1).
\]
DCT 2(R) gives $h_s-h_t=b$.
\normalize
At $z=1/2$, the recurrence and $G(1)=1$ give
\[
 f(1/2)=\sqrt\pi\,G(1/2)^2=g(1/2)\ne0.
\]
Thus $e^b=1$, and
\finish
\end{comparisonproof}
The optional scalar evaluation in Lemma~\ref{lem:barnes-scalar} turns
\eqref{eq:barnes} into the conventional form
\begin{equation}\label{eq:barnes-standard}
 \begin{split}
 G(z)G(z+1/2)^2G(z+1)
  ={}&e^{1/4}A^{-3}2^{-2z^2+3z-11/12}
        \pi^{z-1/2}G(2z),
 \end{split}
\end{equation}
where $A$ is Glaisher's constant.  Its evaluation is a scalar task,
separate from the divisor proof of the dependence on $z$.

\clearpage
\section{Matching lattices: five applications of the same DCT}
\label{sec:lattice-apps}
\subsection{Jacobi's triple product}
\label{ex:jacobi}
For $0<|q|<1$, use
$(a;q)_\infty=\prod_{r\geq0}(1-aq^r)$ and put
\[
 F(z;q)=\sum_{n\in\Z}(-1)^nq^{n(n-1)/2}z^n,\qquad
 P(z;q)=(z;q)_\infty(q/z;q)_\infty.
\]
The identity \cite[\S17.8]{DLMFqTriple} is
\begin{equation}\label{eq:jacobi}
 F(z;q)=(q;q)_\infty P(z;q),\qquad z\in\C^\times.
\end{equation}
\route{DCT 3: lift to the plane, compare complete lattice zeros and multipliers;
match one Laurent coefficient.}
\begin{comparisonproof}
\represent
Choose $\ell=\log q$ and lift by $z=e^w$.  Set
\[
 f(w)=F(e^w;q)=q_s(w),\qquad
 g(w)=(q;q)_\infty P(e^w;q)=q_t(w),
\]
with $s=([h_s],Z_s,\varnothing)$ and
$t=([h_t],Z_t,\varnothing)$.
Both are nonzero entire functions in $w$.
\compare
The independent zero-count lemma, Lemma~\ref{lem:theta-data}, and the
product factors give simple zeros exactly at
\[
 Z_s=\ell\Z+2\pi\ii\Z=Z_t,\qquad P_s=P_t=\varnothing.
\]
This is locally finite in the $w$-plane, where the prescribed annular
factors are formed.
\applydct
Both sides obey
\[
 f(w+2\pi\ii)=f(w),\qquad f(w+\ell)=-e^{-w}f(w),
\]
with the same equations for $g$.  Since $\Re\ell<0$, the periods are
real-linearly independent.  DCT 3 gives $h_s-h_t=b$.
\normalize
The zeroth Laurent coefficient in $z=e^w$ is $1$ on each side:
\[
 [z^0]F(z;q)=1,
 \qquad [z^0]\{(q;q)_\infty P(z;q)\}=1.
\]
The second equality is the independently proved scalar coefficient
lemma, Lemma~\ref{lem:q-scalar}.  Linearity gives $e^b=1$, hence
\finish
\end{comparisonproof}
Surjectivity of the exponential map then gives \eqref{eq:jacobi} on
$\C^\times$.  The full zero classification precedes DCT; it is not
inferred from the product identity being proved.

\clearpage
\subsection{Jacobi's first theta product}
\label{ex:theta}
Let $\Im\tau>0$, $q=e^{\pi\ii\tau}$, and
$q^{1/4}=e^{\pi\ii\tau/4}$.  Define
\[
 \theta_1(z\mid\tau)=2\sum_{n\geq0}(-1)^nq^{(n+1/2)^2}
                                     \sin((2n+1)z).
\]
The product formula is \cite[\S20.5]{DLMFThetaProducts}
\begin{equation}\label{eq:theta}
 \theta_1(z\mid\tau)=2q^{1/4}\sin z
 \prod_{n\geq1}(1-q^{2n})(1-2q^{2n}\cos2z+q^{4n}).
\end{equation}
\route{DCT 3: same simple lattice zeros and exact multipliers; one Fourier coefficient.}
\begin{comparisonproof}
\represent
Set $f=\theta_1(\,\cdot\mid\tau)=q_s$ and let $g=q_t$ be the full
right-hand side of \eqref{eq:theta}, including its scalar factor.
Here $s=([h_s],Z_s,\varnothing)$ and
$t=([h_t],Z_t,\varnothing)$.
\compare
Lemma~\ref{lem:theta-data} and product splitting give
\[
 Z_s=\pi\Z+\pi\tau\Z=Z_t,\qquad P_s=P_t=\varnothing,
\]
with all zeros simple.  Both sides therefore use the same lattice
product \eqref{eq:annular}.
\applydct
For both $F=f$ and $F=g$,
\[
 F(z+\pi)=-F(z),\qquad
 F(z+\pi\tau)=-q^{-1}e^{-2\ii z}F(z).
\]
DCT 3 gives $h_s-h_t=b$.
\normalize
The coefficient of $e^{-\ii z}$ is $\ii q^{1/4}$ on both sides.
For $f$ this follows from its defining series; for $g$ it follows from
Lemma~\ref{lem:q-scalar} with base $q^2$, as detailed there.
This common coefficient is nonzero, so $e^b=1$ and
\finish
\end{comparisonproof}
This application uses the same scalar lemma as the triple product; it
does not need to reprove compactness or to leave a nome-dependent
constant undetermined.

\clearpage
\subsection{The Weierstrass cubic equation}
\label{ex:cubic}
Fix $\Lambda=2\omega_1\Z+2\omega_2\Z$ with
$\Im(\omega_2/\omega_1)>0$, and put $\omega_3=\omega_1+\omega_2$,
$e_j=\wp(\omega_j)$.  The formula is
\begin{equation}\label{eq:cubic}
 (\wp'(z))^2=4\prod_{j=1}^3(\wp(z)-e_j).
\end{equation}
The definitions and the target equation are recorded in
\cite{DLMFEllipticDef,DLMFEllipticODE}. For the classical divisor proof,
see \cite[Theorem~5.4]{McMullen}. The zero information used below
is proved before the cubic in Lemma~\ref{lem:elliptic-data}.
\route{DCT 3: same double half-period zeros and sixth-order poles; one Laurent coefficient.}
\begin{comparisonproof}
\represent
Set $f=(\wp')^2=q_s$ and $g=4\prod_j(\wp-e_j)=q_t$, with
$s=([h_s],Z_s,P_s)$ and $t=([h_t],Z_t,P_t)$.
\compare
The three half-period classes are simple zeros of $\wp'$; each
$\wp-e_j$ has a double zero at the corresponding class.  Hence
\[
 Z_s=2\bigsqcup_{j=1}^3(\omega_j+\Lambda)=Z_t,
 \qquad P_s=6\Lambda=P_t.
\]
There are no other zeros or poles, by the independent torus zero count.
\applydct
Both sides are $\Lambda$-periodic.  DCT 3 gives $h_s-h_t=b$.
\normalize
Using $\wp(z)\sim z^{-2}$ and $\wp'(z)\sim-2z^{-3}$,
\[
 \lc{0}{f}=4=\lc{0}{g}.
\]
Thus $e^b=1$, and
\finish
\end{comparisonproof}
The expanded form is
\begin{equation}\label{eq:cubic-expanded}
 (\wp')^2=4\wp^3-g_2\wp-g_3,\quad
 g_2=60\sum_{\lambda\ne0}\lambda^{-4},\quad
 g_3=140\sum_{\lambda\ne0}\lambda^{-6}.
\end{equation}
Its coefficients follow by expanding \eqref{eq:cubic} at zero, as in
Lemma~\ref{lem:elliptic-data}.

\clearpage
\subsection{Sigma addition}
\label{ex:sigma-add}
For $v\notin\Lambda$, the identity is
\begin{equation}\label{eq:sigma-add}
 \frac{\sigma(z+v)\sigma(z-v)}{\sigma(z)^2\sigma(v)^2}
                 =\wp(v)-\wp(z).
\end{equation}
This is the classical sigma addition formula
\cite[\S23.10]{DLMFEllipticAddition}.
\route{DCT 3: same translated zero cosets and double lattice poles; one Laurent coefficient.}
\begin{comparisonproof}
\represent
Fix $v$ and set
\[
 f(z)=\frac{\sigma(z+v)\sigma(z-v)}{\sigma(z)^2\sigma(v)^2}=q_s(z),
 \qquad g(z)=\wp(v)-\wp(z)=q_t(z),
\]
with $s=([h_s],Z_s,P_s)$ and $t=([h_t],Z_t,P_t)$.
\compare
Lemma~\ref{lem:elliptic-data} gives
\[
 Z_s=(v+\Lambda)\sqcup(-v+\Lambda)=Z_t,
 \qquad P_s=2\Lambda=P_t.
\]
If $2v\in\Lambda$, the zero cosets coincide and count twice on both
sides.  Since $v\notin\Lambda$, there is no zero-pole cancellation.
\applydct
The sigma multipliers cancel between numerator and denominator; both
sides are $\Lambda$-periodic.  DCT 3 gives $h_s-h_t=b$.
\normalize
Oddness of $\sigma$, $\sigma(z)\sim z$, and $\wp(z)\sim z^{-2}$ give
\[
 \lc{0}{f}=-1=\lc{0}{g}.
\]
Therefore $e^b=1$, and
\finish
\end{comparisonproof}

\clearpage
\subsection{Sigma duplication}
\label{ex:sigma-dup}
The identity is
\begin{equation}\label{eq:sigma-dup}
 \sigma(2z)=-\wp'(z)\sigma(z)^4.
\end{equation}
It is recorded in \cite[\S23.10]{DLMFEllipticAddition}.  Although it is
also a limit of sigma addition, its direct DCT proof fits the same template.
\route{DCT 3: same simple half-lattice zeros and matching sigma multipliers; one derivative.}
\begin{comparisonproof}
\represent
Set $f(z)=\sigma(2z)=q_s(z)$ and $g(z)=-\wp'(z)\sigma(z)^4=q_t(z)$,
with $s=([h_s],Z_s,\varnothing)$ and
$t=([h_t],Z_t,\varnothing)$; the apparent poles of $g$ are removable.
\compare
At a lattice point, $g$ has order $4-3=1$; at a nonzero half-period
class it has order one from $\wp'$.  Hence
\[
 Z_s=\tfrac12\Lambda
 =\Lambda\sqcup\bigsqcup_{j=1}^3(\omega_j+\Lambda)=Z_t,
 \qquad P_s=P_t=\varnothing.
\]
Every zero is simple.
\applydct
The standard sigma transformations yield, for both $F=f$ and $F=g$,
\[
 F(z+2\omega_j)=e^{8\eta_j(z+\omega_j)}F(z),\qquad j=1,2.
\]
DCT 3 gives $h_s-h_t=b$.
\normalize
At their common simple zero,
\[
 \lc{0}{f}=2=\lc{0}{g},
\]
since $-\wp'(z)\sigma(z)^4\sim2z$.  Thus $e^b=1$ and
\finish
\end{comparisonproof}

\clearpage
\section{Consequences of already normalized identities}
\label{sec:consequences}
Once a four-step proof gives $[h_s-h_t]=[0]$, ordinary differentiation,
algebra, and specialization preserve the identity.  The following
formulas are presented as \emph{consequences}, not as new proof methods
or as divisor comparisons with missing zero information.

\subsection{Cotangent and cosecant-squared partial fractions}
Logarithmically differentiating \eqref{eq:sine} gives
\begin{equation}\label{eq:cot}
 \pi\cot\pi z=\frac1z+
 \sum_{n\geq1}\left(\frac1{z-n}+\frac1{z+n}\right).
\end{equation}
Differentiating once more gives
\begin{equation}\label{eq:csc}
 \frac{\pi^2}{\sin^2\pi z}=\sum_{n\in\Z}\frac1{(z-n)^2}.
\end{equation}
These are the classical partial fractions \cite[\S4.22]{DLMFTrig}.
For $|z|\leq R$ and $n>2R$, a paired summand in \eqref{eq:cot} is
$2z/(z^2-n^2)=O_R(n^{-2})$.  The differentiated series has the same
summable bound on compact sets avoiding the poles.  Normal convergence
therefore justifies both differentiations.  No independent principal-part
proof has been inserted into the DCT sequence.

\subsection{The zeta logarithmic derivatives}
Logarithmic differentiation of \eqref{eq:xi-centered} gives
\begin{equation}\label{eq:xi-log}
 \frac{X'(z)}{X(z)}=
 \sum_{\{\alpha,-\alpha\}}m(\alpha)
 \left(\frac1{z-\alpha}+\frac1{z+\alpha}\right).
\end{equation}
The paired summand is $2z/(z^2-\alpha^2)$, bounded by
$C_R|\alpha|^{-2}$ for $|\alpha|>2R$.  The zero summability from
Lemma~\ref{lem:paired-growth} justifies differentiation.  Thus
\begin{equation}\label{eq:xi-log-s}
 \frac{\xi'(s)}{\xi(s)}=
 \sum_{\{\rho,1-\rho\}}
 \left(\frac1{s-\rho}+\frac1{s-(1-\rho)}\right),
\end{equation}
where pairs retain multiplicity.  Differentiating the definition of $\xi$
then yields
\begin{equation}\label{eq:zeta-log}
 \begin{split}
 \frac{\zeta'(s)}{\zeta(s)}={}&
 \sum_{\{\rho,1-\rho\}}
 \left(\frac1{s-\rho}+\frac1{s-(1-\rho)}\right)\\
 &-\frac1s-\frac1{s-1}+\frac12\log\pi-\frac12\psi(s/2),
 \qquad \psi=\Gamma'/\Gamma.
 \end{split}
\end{equation}
All equalities are meromorphic; paired summation is part of the formulas.

\subsection{Weierstrass addition and duplication}
Let $\mathcal Z=\sigma'/\sigma$, so $\mathcal Z'=-\wp$.
Logarithmically differentiate the already normalized sigma addition
identity \eqref{eq:sigma-add} first in $u$ and then in $v$, and add the
results.  One obtains
\begin{equation}\label{eq:Z-add}
 \mathcal Z(u+v)-\mathcal Z(u)-\mathcal Z(v)
 =\frac12\frac{\wp'(u)-\wp'(v)}{\wp(u)-\wp(v)}.
\end{equation}
This is an equality on the open set where the denominators are nonzero,
and hence a meromorphic equality.

For completeness, put $x=\wp(u)$, $y=\wp(v)$,
$a=\wp'(u)$, $b=\wp'(v)$, and $d=x-y$.  Differentiating
\eqref{eq:Z-add} in $u$ gives
\[
 \wp(u+v)=x-
 \frac{\wp''(u)d-(a-b)a}{2d^2}.
\]
The proved cubic gives
$a^2-b^2=4(x^3-y^3)-g_2(x-y)$ and
$\wp''(u)=6x^2-g_2/2$.  Substituting and simplifying gives the classical
addition theorem \cite[\S23.10]{DLMFEllipticAddition}:
\begin{equation}\label{eq:wp-add}
 \wp(u+v)=-\wp(u)-\wp(v)+\frac14
 \left(\frac{\wp'(u)-\wp'(v)}{\wp(u)-\wp(v)}\right)^2.
\end{equation}
Letting $v\to u$ at ordinary points gives
\begin{equation}\label{eq:wp-dup}
 \wp(2u)=-2\wp(u)+\frac14
                    \left(\frac{\wp''(u)}{\wp'(u)}\right)^2.
\end{equation}
The identities extend meromorphically across exceptional arguments.
Thus the cubic and sigma DCT proofs supply the addition and duplication
formulas without a separate principal-part comparison argument.

\section{What the collection establishes}
\label{sec:conclusion}
The thirteen principal comparisons instantiate one proof protocol. Four
invoke common parity, three common recurrence and positivity, and five
matching lattice transformations; the reciprocal Gamma product uses
finite order and two local coefficients. Each starts from $f=q_s$,
$g=q_t$, compares the complete divisors, and ends with $[h_s-h_t]=[0]$.
The specializations and differentiated identities in
Section~\ref{sec:consequences} extend the collection without a new DCT.

\paragraph{Uniformity is the demonstrated benefit.}
The input changes---multiplicities, growth, recurrences, lattice
multipliers, or scalar coefficients---but the division of the proof does
not. A function-family lemma is reused within its family, while a DCT is
reused across families. The breadth of the worked cases makes this reuse
inspectable. Some comparisons are short; others primarily gain a common
organization with explicit dependencies. Neither the Bessel zero theorem
nor the theta scalar calculation becomes cost-free, and no universal
reduction in proof length or determination from discrete samples is claimed.

\paragraph{The fixed convention and the residual comparison.}
If another fixed product has $\widetilde p_D=e^{u_D}p_D$, then
\[
 \widetilde h_t=h_t-u_{Z_t}+u_{P_t}.
\]
Once the divisors agree,
$[\widetilde h_s-\widetilde h_t]=[h_s-h_t]$. Thus the annular rule fixes
definite common factors without adding a uniqueness hypothesis. It also
makes clear why the individual logarithmic coordinates need not be
computed.

\paragraph{The algebraic companion.}
Appendix~\ref{app:algebra} develops the full ring and field presentation,
including the multiplication discrepancy, compatible corrections,
distributivity, the entire-function subring and fraction field, and
restricted-divisor closure. Its transported evaluation $\Psi_\beta$ is
separate from $\Phi(t)=q_t$, which is used in every identity proof.

The common conclusion is
\proto{\centering
\textbf{Complete divisor data + DCT hypotheses + normalization}\par
\smallskip
$\Longrightarrow\quad [h_s]=[h_t]\quad\Longrightarrow\quad f=g.$}
The classical principles supply correctness; the fixed coordinates and
the complete series of four-step comparisons supply the uniform
organization developed here.

\clearpage
\appendix
\addtocontents{toc}{\protect\setcounter{tocdepth}{1}}

\section{Independent analytic input}
\label{app:inputs}
The lemmas in this appendix supply facts about the functions or products
before any target identity uses them.  They make the dependencies visible;
they are not claimed to have no cost.  In particular, neither a Bessel
product nor the Weierstrass cubic is used to establish the zero data on
which its own DCT proof depends.

\subsection{Paired products: convergence and growth}
\begin{lemma}\label{lem:paired-growth}
Let a locally finite multiset be symmetric under $a\mapsto-a$, with
multiplicity $m$ at zero.  Suppose its nonzero counting function satisfies
$n(r)=O_\varepsilon(r^{\rho+\varepsilon})$ for every $\varepsilon>0$,
where $0\leq\rho<2$.  Then
\[
 P(z)=z^m\prod_{\{a,-a\}}(1-z^2/a^2)^{m(a)}
\]
converges locally uniformly, has exactly that multiset, and has order at
most $\rho$.  In particular this holds for a symmetric zero multiset of
an entire function of order at most $\rho<2$.
\end{lemma}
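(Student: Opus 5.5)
The proof has three parts: summability of $|a|^{-2}$, a comparison of $\log|P|$ with the counting function, and the order bound. Write the nonzero points as pairs $\{a,-a\}$ with multiplicity $m(a)$, and let $n(r)$ count the nonzero points in $|a|\le r$ with multiplicity. Since $\rho<2$, fix $\varepsilon>0$ with $\rho+\varepsilon<2$.

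\textbf{Summability.} By Stieltjes integration by parts,
\[
 \sum_{a\neq0}|a|^{-2}=\int_{0}^{\infty}r^{-2}\,dn(r)
 =\lim_{R\to\infty}n(R)R^{-2}+2\int_0^\infty n(r)r^{-3}\,dr,
\]
where the integrals start just below the smallest nonzero modulus. Both terms are finite because $n(r)=O(r^{\rho+\varepsilon})$. Hence $\sum m(a)|a|^{-2}$ over pairs converges.

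\textbf{Convergence and exact zero multiset.} For $|z|\le R$ and $|a|>2R$ we have $|z^2/a^2|\le1/4$, so $|\log(1-z^2/a^2)|\le 2R^2|a|^{-2}$. The tail of the product therefore converges normally to a nonvanishing holomorphic function. The finitely many remaining factors, together with $z^m$, supply exactly the prescribed zeros: $m$ at the origin and $m(a)$ at each of $\pm a$. This gives locally uniform convergence and the exact multiset.

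\textbf{Order bound.} The main estimate is the standard bound for genus-one type products, adapted to the paired genus-zero factor in $z^2$. I would use $\log|1-w|\le\log(1+|w|)$ with $w=z^2/a^2$ and $r=|z|$. Split the pairs at $|a|=r$. Pairs with $|a|\le r$ contribute at most
\[
 \sum_{|a|\le r}\log(1+r^2/|a|^2)\le\int_0^r\log(1+r^2/t^2)\,dn(t),
\]
and pairs with $|a|>r$ contribute at most $r^2\sum_{|a|>r}|a|^{-2}=r^2\int_r^\infty t^{-2}\,dn(t)$. Integrating by parts again gives $\int_0^r n(t)\,\frac{2r^2}{t(t^2+r^2)}\,dt$, plus a tail of size $2r^2\int_r^\infty n(t)t^{-3}\,dt$. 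With $n(t)\le Ct^{\rho+\varepsilon}$, and using $\rho+\varepsilon<2$ for the convergence of both integrals near their critical ends, a scaling substitution $t=ru$ bounds each piece by $O(r^{\rho+\varepsilon})$. The factor $z^m$ adds only $O(\log r)$. Thus $\log M_P(r)=O(r^{\rho+\varepsilon})$ for every small $\varepsilon$, so the order is at most $\rho$.

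This integral estimate, in particular checking that the bound $\rho+\varepsilon<2$ makes both scaled integrals finite near $u=0$ and $u=\infty$, is the step I expect to require the most care. The rest is routine.

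\textbf{Final clause.} If the multiset is the symmetric zero set of an entire function of order at most $\rho<2$, Jensen's formula gives $n(r)=O_\varepsilon(r^{\rho+\varepsilon})$. The hypothesis is then met and the lemma applies.
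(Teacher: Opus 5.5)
Your proof is correct and takes essentially the paper's route. Both arguments prove summability of $\sum|a|^{-2}$ by Stieltjes integration by parts and get normal convergence from $|\log(1-w)|\le2|w|$. Both then obtain the order bound from $\log|P(z)|\le m\log r+\sum m(a)\log(1+r^2/|a|^2)$, integration by parts against $n(t)$, and the scaling $t=ru$ with $\rho+\varepsilon<2$.

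Your split at $|a|=r$ is only a cosmetic variant of the paper's single kernel $2r^2/(t(t^2+r^2))$ over the whole range. The boundary terms you silently drop are $n(r)\log2$ from the inner part and $-n(r)$ from the outer part. Their sum is negative, so your stated upper bound is valid as written.
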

\begin{proof}
Choose $\rho<s<2$.  Counting with multiplicity and integrating by parts
gives $\sum_{a\ne0}|a|^{-2}<\infty$.  This proves normal convergence
and exact zeros of the paired product.  Up to its finite zero factor,
\[
 \log |P(z)|\leq\sum_{\{a,-a\}}m(a)\log(1+r^2/|a|^2)
 \leq C+2r^2\int_{a_0}^{\infty}\frac{n(t)}{t(t^2+r^2)}\dd t
 =O(r^s),
\]
where $a_0>0$ lies below every nonzero modulus; a finite multiset causes
no difficulty.  The last integral after scaling is bounded because
$0<s<2$.  Taking $s$ arbitrarily close to $\rho$ proves the order bound.
For zeros of an entire function, Jensen's formula gives the stated
counting estimate; see \cite[\S3]{McMullen}.  Thus this product bound does
not assume the product identity it will be used to prove.
\end{proof}

\subsection{Gamma: the data used by the comparisons}
\begin{lemma}\label{lem:gamma-data}
For the standard Gamma function, $1/\Gamma$ is entire of order one with
simple zeros $0,-1,-2,\ldots$, and no others.  Moreover,
\[
 \Gamma(z+1)=z\Gamma(z),\quad \Gamma(1)=1,\quad
 \Gamma'(1)=-\gamma,\quad \Gamma(x)>0\ (x>0).
\]
Consequently $\Gamma(z)^{-1}=z+\gamma z^2+O(z^3)$ at zero.  The
function $ze^{\gamma z}\prod_{n\geq1}E_1(-z/n)$ is entire of order at
most one with the same zeros.
\end{lemma}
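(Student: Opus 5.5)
We take as the starting point the standard definition of $\Gamma$: the Euler integral $\Gamma(z)=\int_0^\infty t^{z-1}e^{-t}\dd t$ for $\Re z>0$, continued to the left by the recurrence. Integration by parts gives $\Gamma(z+1)=z\Gamma(z)$, and direct evaluation gives $\Gamma(1)=1$. Positivity of the integrand gives $\Gamma(x)>0$ for real $x>0$. Iterating the recurrence as $\Gamma(z)=\Gamma(z+n+1)/\bigl(z(z+1)\cdots(z+n)\bigr)$ continues $\Gamma$ meromorphically to $\C$. Its only possible poles are at $0,-1,-2,\ldots$. They are simple and genuinely present, since the residue at $-n$ is $(-1)^n/n!\ne0$.

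To obtain entireness of $1/\Gamma$ with exactly these zeros, we also need that $\Gamma$ has no zeros. We would take this from the reflection formula $\Gamma(z)\Gamma(1-z)=\pi/\sin\pi z$. However, the paper proves that formula later using this lemma, so it must be established independently here. A cleaner route avoids reflection. Let $g(z)=ze^{\gamma z}\prod_{n\geq1}E_1(-z/n)$. Since $|\log E_1(w)|\leq2|w|^2$ for $|w|\leq1/2$, the product converges normally. It is entire with simple zeros exactly at $0,-1,-2,\ldots$.

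Write $g_N(z)=ze^{\gamma z}\prod_{n\le N}(1+z/n)e^{-z/n}=e^{(\gamma-H_N(1))z}\,z(z+1)\cdots(z+N)/N!$. From this one sees $g(z+1)=g(z)/z$ by the usual telescoping: $(N+1+z)/N\to1$ together with $H_N(1)-\log N\to\gamma$. The function $\Gamma\cdot g$ is then entire and $1$-periodic. We identify it as $1$ on $0<x<1$ by the Gauss limit $\Gamma(z)=\lim N!N^z/(z(z+1)\cdots(z+N))$. That limit follows from $\int_0^N t^{z-1}(1-t/N)^N\dd t$ by dominated convergence, and it equals $1/g(z)$ directly. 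This proves at once that $1/\Gamma=g$ is entire, with the stated zeros and no others. If the lemma is meant only to record data, citing the DLMF for the Gauss limit suffices.

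The order bound comes next. For $|z|=r$ we estimate $\log|g(z)|\le \gamma r+\log r+\sum_n\log|E_1(-z/n)|$. We split the sum at $n\approx2r$. For $n>2r$ the tail contributes $\le\sum 2r^2/n^2=O(r)$. For $n\le 2r$ we use $\log|E_1(w)|\le\log(1+|w|)+|w|$, which gives $O(r\log r)$. Hence $\log M(r)=O(r\log r)$ and the order is at most one. It is exactly one because $g$ has infinitely many zeros with $\sum1/n=\infty$, which order $<1$ would forbid via the convergence exponent. We also need the same bound for the general product $g$ cited in the lemma, which is the same function.

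For the local expansion we note that $\Gamma'(1)=-\gamma$ follows from logarithmic differentiation of $1/g(z+1)=\Gamma(z+1)$ at $z=0$. This gives $-\Gamma'(1)=g'(1)/g(1)=\gamma+1+\sum_n\bigl(1/(n+1)-1/n\bigr)=\gamma$, since $g(1)=1$. The expansion $1/\Gamma(z)=z/\Gamma(1+z)=z(1+\gamma z+O(z^2))$ then follows. The main obstacle is the identification $\Gamma\cdot g\equiv1$, i.e.\ the Gauss limit, which carries the real analytic content. We would handle it by dominated convergence with $(1-t/N)^N\le e^{-t}$, and then use the identity theorem to extend the equality from $(0,1)$ to all of $\C$.
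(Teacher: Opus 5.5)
Your proof is correct. Its last two parts coincide with the paper's. Splitting the product at $n\approx 2r$ and bounding the initial terms by $\log(1+r/n)+r/n$ and the tail by $2r^2/n^2$ is exactly the paper's order estimate. The expansion through $1/\Gamma(z)=z/\Gamma(1+z)$ is also the paper's.

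The routes differ on the Gamma facts themselves. The paper does not prove them. It cites as classical input, ``taken independently of the target product'': that $1/\Gamma$ is entire of order one with exactly the zeros $0,-1,-2,\ldots$, the recurrence, $\Gamma'(1)=-\gamma$, and positivity. You instead derive them from the Euler integral. Your key step, $\Gamma g\equiv 1$ via the Gauss limit, is precisely the reciprocal product \eqref{eq:gamma-product}.

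What your route buys:
\begin{itemize}
\item It is self-contained.
\item It actually proves $\Gamma'(1)=-\gamma$, and proves that the order is exactly one through the convergence exponent.
\item It correctly avoids reflection, whose proof in \S\ref{ex:reflection} depends on this lemma.
\end{itemize}

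The cost is architectural. The lemma would then already contain the identity of \S\ref{ex:gamma-product}, so that four-step comparison becomes only a coordinate verification. This is exactly the case the paper flags in its remark after the lemma: ``If that theory starts from the reciprocal product, the reciprocal-product application is a normalization example rather than an independent proof.'' The paper's citation route is what lets \S\ref{ex:gamma-product} count as an independent comparison, at the price of leaving the Gamma facts unproved there. Neither route creates circularity for reflection or Gauss multiplication.

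A minor simplification: the Gauss limit holds on all of $\Re z>0$, not just on $(0,1)$. So the $1$-periodicity of $\Gamma g$ is unnecessary; the identity theorem alone extends $\Gamma g=1$ to $\C$.
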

\begin{proof}
The Gamma assertions are classical input from analytic continuation,
the elementary recurrence, and the standard growth theory, taken
independently of the target product in Section~\ref{ex:gamma-product};
see \cite{DLMFGammaDef,DLMFGammaValues,DLMFGammaAsymptotic,McMullen}.
The local expansion follows by writing $1/\Gamma(z)=z/\Gamma(1+z)$.
For the displayed product, normal convergence follows from
$\sum n^{-2}<\infty$.  For $|z|\leq r$, split at $n=2r$.
The initial logarithmic upper bound is
$\sum_{n\leq2r}(\log(1+r/n)+r/n)=O(r\log(r+2))$, and the tail is
$O(\sum_{n>2r}r^2/n^2)=O(r)$.  Hence the order is at most one.
\end{proof}
The first assertions may equivalently be adopted from the established
Gamma theory.  If that theory starts from the reciprocal product, the
reciprocal-product application is a normalization example rather than an
independent proof.  No circularity of this kind is used to prove reflection
or multiplication.

\subsection{Barnes: zeros, growth, and positivity}
\begin{lemma}\label{lem:barnes-data}
The standard Barnes function obeys
\[
 G(z+1)=\Gamma(z)G(z),\qquad G(1)=1,
\]
is entire of order at most two, has a zero of multiplicity $m+1$ at
$-m$ for $m\geq0$, and no other zeros, and is positive for positive real
arguments.
\end{lemma}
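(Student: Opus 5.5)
The plan is to take the Barnes function in its standard Weierstrass-product definition (DLMF \S5.17),
\[
 G(z+1)=(2\pi)^{z/2}\exp\!\Bigl(-\tfrac12\bigl(z+z^2(1+\gamma)\bigr)\Bigr)
 \prod_{k\geq1}\Bigl(1+\frac zk\Bigr)^{k}\exp\!\Bigl(-z+\frac{z^2}{2k}\Bigr),
\]
and read off every assertion from it. This keeps the lemma independent of the duplication identity, as the paper's dependency discussion requires. The recurrence $G(z+1)=\Gamma(z)G(z)$ and $G(1)=1$ are the standard accompanying facts and may be cited from \cite[\S5.17]{DLMFBarnes}. Alternatively, the recurrence can be checked by the same device the paper uses elsewhere: compute $G(z+2)/G(z+1)$ as a ratio of products and compare it with the Weierstrass product of $\Gamma(z+1)$ from Lemma~\ref{lem:gamma-data}. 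I will cite it rather than redo this.

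For the zeros, the factor $(1+z/k)^k$ vanishes exactly at $z=-k$, with multiplicity $k$, and the exponential factors never vanish. The product converges normally because $\log\bigl[(1+w)^k e^{-kw+kw^2/2}\bigr]=O(k|w|^3)$ with $w=z/k$, which gives terms $O(|z|^3k^{-2})$. Hence $G(z+1)$ has zeros exactly at $-k$ with multiplicity $k$ for $k\geq1$. Shifting by one, $G(z)$ has a zero of multiplicity $m+1$ at $-m$ for $m\geq0$, and no other zeros. For positivity, every factor is real and positive for real $z>-1$, so $G(x+1)>0$ for $x>-1$, and in particular $G(x)>0$ for $x>0$.

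The main obstacle is the order bound. I would follow the splitting in the proof of Lemma~\ref{lem:gamma-data}. Take $|z|\leq r$ and split the product at $k=2r$.
\begin{itemize}
\item For the tail $k>2r$, the cubic estimate above bounds the logarithm by $O\bigl(\sum_{k>2r}r^3/k^2\bigr)=O(r^2)$.
\item For the initial factors $k\leq 2r$, use $\log|1+z/k|^k\leq k\log(1+r/k)$, together with $kr$ and $r^2/(2k)$ from the exponentials. This sums to $O(r^2\log(r+2))$.
\item The prefactor contributes $O(r^2)$.
\end{itemize}
Altogether $\log M(r)=O(r^2\log r)$, so $\log\log M(r)\leq(2+o(1))\log r$ and the order is at most two. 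The shift $z\mapsto z-1$ does not change the order.
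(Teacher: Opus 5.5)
This is essentially the paper's proof. It uses the same DLMF product $G(1+z)=(2\pi)^{z/2}e^{-z(z+1)/2-\gamma z^2/2}\prod_{k\geq1}E_2(-z/k)^k$, the same cubic estimate for normal convergence, and reads off the zeros from $(1+z/k)^k$. Positivity is factorwise for real $z>-1$; your summable logarithm bound is what guarantees the positive factors have a nonzero limit. The order bound uses the same split at $k=2r$.

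One slip needs fixing. In the initial range the factor $e^{-z}$ contributes at most $r$ to $\log|\cdot|$, not $kr$. Taken literally, $\sum_{k\leq 2r}kr$ is of size $r^3$ and would give only order at most three. With $r$ one gets $\sum_{k\leq2r}\bigl(k\log(1+r/k)+r+r^2/(2k)\bigr)=O(r^2\log(r+2))$, exactly as in the paper.
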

\begin{proof}
Use the standard product and recurrence \cite[(5.17.1), (5.17.3)]{DLMFBarnes}:
\[
 G(1+z)=(2\pi)^{z/2}e^{-z(z+1)/2-\gamma z^2/2}
                         \prod_{k\geq1}E_2(-z/k)^k.
\]
The product converges normally since $\sum k/k^3<\infty$ and gives
exactly the asserted zeros.  For real $z>-1$, every factor and the real
exponential prefactor are positive; normal convergence of the logarithmic
tail makes the nonzero limit positive.  This proves positivity for all
positive arguments of $G$.
For $|z|\leq r$, the tail $k>2r$ contributes at most
$C\sum_{k>2r}k(r/k)^3=O(r^2)$.  The initial factors contribute at most
\[
 \sum_{k\leq2r}\left(k\log(1+r/k)+r+\frac{r^2}{2k}\right)
 =O(r^2\log(r+2)).
\]
The prefactor has order at most two, establishing the required bound.
\end{proof}

\subsection{Bessel: a zero theorem before the product}
\begin{lemma}\label{lem:bessel-data}
For real $\nu\geq0$, $B_\nu$ is an even entire function of order at most
one, with $B_\nu(0)=1$.  All its zeros are real, nonzero, simple, and
form infinitely many pairs $\pm j_{\nu,k}$, where
$0<j_{\nu,1}<j_{\nu,2}<\cdots$.
\end{lemma}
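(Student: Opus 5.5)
The argument treats the elementary properties first and then the zero theorem, which is the real content. Evenness, $B_\nu(0)=1$, and entireness come straight from the defining series, since only even powers of $z$ appear and the $k=0$ term is $1$. For the order bound, write the coefficient as $c_k=\Gamma(\nu+1)/(k!\,\Gamma(k+\nu+1))$ and use $\Gamma(k+\nu+1)\geq C_\nu\,k!$ for $\nu\geq0$. Then $M(r)\leq C\sum_k (r/2)^{2k}/(k!)^2\leq C e^{r}$, so the order is at most one. Equivalently, the standard coefficient formula $\rho=\limsup k\log k/(-\log|a_k|)$ applied to $a_{2k}=c_k2^{-2k}$ gives $\rho\le 1$.

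For reality of the zeros, set $y(x)=x^\nu B_\nu(x)$, which is a constant multiple of $J_\nu$. It satisfies Bessel's equation
\[
x^2y''+xy'+(x^2-\nu^2)y=0.
\]
Suppose $\alpha\neq\beta$ are both zeros of $B_\nu$. The Lommel identity
\[
(\alpha^2-\beta^2)\int_0^1 x\,y(\alpha x)y(\beta x)\dd x=0
\]
follows from the Sturm--Liouville form $(xu')'+(\alpha^2x-\nu^2/x)u=0$ for $u(x)=y(\alpha x)$ by integrating the Wronskian combination. The boundary terms vanish at $x=1$ because of the zeros, and at $x=0$ because $y(\alpha x)=O(x^\nu)$ for $\nu\ge0$. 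If $\alpha$ is a non-real zero, then $\bar\alpha$ is also a zero, since the coefficients are real. Take $\beta=\bar\alpha$. Whenever $\alpha^2\neq\bar\alpha^2$, the integrand $x^{2\nu+1}|B_\nu(\alpha x)|^2$ is positive, which is a contradiction. The remaining case is $\alpha^2$ real and negative, i.e.\ $\alpha=\ii t$. There every term of $B_\nu(\ii t)=\sum c_k(t/2)^{2k}$ is positive, so $B_\nu(\ii t)\ge1$. Hence all zeros are real, and $B_\nu(0)=1$ makes them nonzero. Simplicity follows from uniqueness for the second-order ODE: at $x_0>0$ with $y(x_0)=y'(x_0)=0$ we would get $y\equiv0$.

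The main obstacle is infinitude, because the statement needs infinitely many pairs. Since the order is at most one, Hadamard factorization with finitely many zeros would give $B_\nu=e^{az+b}Q(z)$ with $Q$ a polynomial. Evenness forces $a=0$, so $B_\nu$ would be a polynomial. That is impossible because every coefficient $c_k$ is nonzero. This step needs only the finite-order theory already cited for DCT~2. Finally, the positive zeros are discrete, so they can be ordered as $0<j_{\nu,1}<j_{\nu,2}<\cdots$, and evenness pairs each with its negative.
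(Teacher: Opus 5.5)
Your proof is correct. Its skeleton is the paper's: a coefficient bound for order at most one, a Sturm--Liouville integration by parts for reality, vanishing Cauchy data for simplicity, and Hadamard factorization with evenness for infinitude.

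The genuine difference is the reality step. You use the classical Lommel cross identity between the two distinct zeros $\alpha$ and $\bar\alpha$ for $x^\nu B_\nu$. That identity is vacuous when $\alpha^2=\bar\alpha^2$, so you need the separate positivity argument on the imaginary axis. Also, the boundary term $x(u'v-uv')$ at $x=0$ needs the derivative bound $u'=O(x^{\nu-1})$ (or $u'=O(1)$ when $\nu=0$), not only $u=O(x^\nu)$; this is immediate from $u=x^\nu B_\nu(\alpha x)$.

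The paper instead keeps $y(x)=B_\nu(ax)$ in the form $(x^{2\nu+1}y')'+a^2x^{2\nu+1}y=0$ and multiplies by $\bar y$, obtaining $a^2\int_0^1x^{2\nu+1}|y|^2\dd x=\int_0^1x^{2\nu+1}|y'|^2\dd x>0$. This single energy identity gives $a^2>0$ at once, so purely imaginary zeros need no separate case. It also avoids the branch of $(\alpha x)^\nu$ and the singular coefficient $\nu^2/x$. Its boundary term is simply $O(x^{2\nu+2})$, because $B_\nu'$ is odd.

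Your route additionally yields the weighted orthogonality of the functions $x^\nu B_\nu(j_{\nu,k}x)$ for distinct zeros. For this lemma, though, the paper's identity is shorter.
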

\begin{proof}
Its defining power series is the normalized Bessel series
\cite{DLMFBesselDef}.  Since $(\nu+1)_k\geq k!$ and
$(2k)!\leq4^k(k!)^2$,
\[
 |B_\nu(z)|\leq\sum_{k\geq0}\frac{(|z|/2)^{2k}}{(k!)^2}
 \leq\cosh |z|\leq e^{|z|}.
\]
Termwise differentiation gives
$B_\nu''+(2\nu+1)z^{-1}B_\nu'+B_\nu=0$ away from zero.
For any nonzero zero $a$, put $y(x)=B_\nu(ax)$ on $[0,1]$.  Then
\[
 (x^{2\nu+1}y')'+a^2x^{2\nu+1}y=0,\quad y(0)=1,\quad y(1)=0.
\]
Multiplying by $\overline y$ and integrating by parts gives
\[
 a^2\int_0^1x^{2\nu+1}|y|^2\dd x
       =\int_0^1x^{2\nu+1}|y'|^2\dd x>0.
\]
The boundary terms vanish, since $y'(x)=O(x)$ at zero.  Thus $a^2>0$
and $a$ is real.  A multiple zero at $a\ne0$ would make both Cauchy
data of the differential equation zero, forcing $B_\nu\equiv0$.

If there were only finitely many zeros, Hadamard factorization would give
$B_\nu=e^{az+b}P$ with $P$ an even polynomial.  Common evenness forces
$a=0$, contradicting the infinitely many nonzero coefficients of the
defining series.  Thus the zeros are infinite and can be enumerated as
stated.  No Bessel product is used in this argument.
\end{proof}

\subsection{Completed zeta: the independent analytic input}
\begin{lemma}\label{lem:xi-data}
The completed function $\xi$ is entire of order at most one, satisfies
$\xi(s)=\xi(1-s)$, and has exactly the nontrivial zeta zeros with their
multiplicities.  Its central value $\xi(1/2)$ is nonzero.
\end{lemma}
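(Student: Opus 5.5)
The plan is to follow the pattern of the other appendix lemmas. The classical analytic input about $\zeta$ and $\Gamma$ is taken from the standard theory, and we check that no product formula for $\xi$ enters. Concretely, we use the meromorphic continuation of $\zeta$ to $\C$ with a single simple pole at $s=1$ of residue $1$; Riemann's functional equation for $\Lambda(s)=\pi^{-s/2}\Gamma(s/2)\zeta(s)$, namely $\Lambda(s)=\Lambda(1-s)$; the trivial zeros of $\zeta$ at $-2,-4,\ldots$; and the absence of zeros of $\zeta$ in $\operatorname{Re}s\geq1$, citing \cite{DLMFZetaDef,DLMFZetaFE,DLMFZetaZeros}.

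\textbf{Entireness and symmetry.} Since $\tfrac12s(s-1)$ is invariant under $s\mapsto1-s$, the symmetry $\xi(s)=\xi(1-s)$ follows at once from $\Lambda(s)=\Lambda(1-s)$. For entireness we track poles. The factor $s-1$ cancels the pole of $\zeta$ at $1$. The factor $s$ cancels the pole of $\Gamma(s/2)$ at $0$; note that $\zeta(0)=-\tfrac12\ne0$. At $s=-2n$ with $n\geq1$, the simple poles of $\Gamma(s/2)$ are cancelled by the simple trivial zeros of $\zeta$. Elsewhere every factor is holomorphic.

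\textbf{Zeros.} In $\operatorname{Re}s\geq1$ none of $\tfrac12s(s-1)$, $\pi^{-s/2}$, $\Gamma(s/2)$, $\zeta$ vanishes, apart from the factor $s-1$, which meets the pole of $\zeta$ and leaves $\xi(1)=\tfrac12\ne0$. By symmetry $\xi$ has no zeros in $\operatorname{Re}s\leq0$ either. In the strip $0<\operatorname{Re}s<1$ the factors other than $\zeta$ are holomorphic and nonzero, since $\Gamma$ never vanishes. Hence the zeros of $\xi$ are exactly the nontrivial zeros of $\zeta$, with equal multiplicities.

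\textbf{Central value.} We show $\xi(1/2)\ne0$ by showing $\zeta(1/2)\ne0$. Use the alternating series $(1-2^{1-s})\zeta(s)=\eta(s)=\sum_{n\ge1}(-1)^{n-1}n^{-s}$, valid for real $s>0$. Grouping consecutive terms shows $\eta(1/2)>0$, while $1-2^{1/2}<0$, so $\zeta(1/2)<0$. The other factors at $s=1/2$ are nonzero and real, giving $\xi(1/2)\neq0$ (indeed $\xi(1/2)>0$).

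\textbf{Order.} This is the main obstacle. We plan to prove $\log|\xi(s)|=O(|s|\log|s|)$ only on the half-plane $\operatorname{Re}s\geq1/2$, and then transfer the bound by the symmetry $\xi(s)=\xi(1-s)$.
\begin{itemize}
\item[(a)] For the elementary factors, Stirling's formula \cite{DLMFGammaAsymptotic} gives $|\Gamma(s/2)|\leq e^{C|s|\log|s|}$ there, and $\pi^{-s/2}$ and the polynomial factor are $e^{O(|s|)}$.
\item[(b)] For $\zeta$ on $\operatorname{Re}s\ge1/2$ away from $1$, we use the first-order Euler--Maclaurin continuation
\[
\zeta(s)=\frac{s}{s-1}-s\int_1^\infty \{x\}x^{-s-1}\dd x .
\]
This gives $|(s-1)\zeta(s)|=O(|s|^2)$ on that half-plane, which is polynomial and harmless.
\item[(c)] Combining (a) and (b) gives $\log|\xi(s)|=O(|s|\log|s|)$ on the half-plane. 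Together with the symmetry this gives order at most one on all of $\C$.
\end{itemize}
The delicate point is checking the uniformity of the Stirling bound in the closed sector near the negative real axis. This is exactly what restricting to $\operatorname{Re}s\geq1/2$ avoids.
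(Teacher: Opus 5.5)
Your proposal is correct and follows essentially the paper's route. Classical continuation and the functional equation give entireness, symmetry and the zero identification; Euler--Maclaurin plus Stirling on a right half-plane, transferred by $\xi(s)=\xi(1-s)$, give the order bound; and positivity of $\eta(1/2)$ gives the central value. Your single Euler--Maclaurin bound on $\Re s\geq 1/2$ simply replaces the paper's separate use of the Dirichlet series on $\Re s\geq 2$. The one input you use that the statement does not need is the nonvanishing of $\zeta$ on $\Re s=1$: any zero there would just be a nontrivial zero carried into $\xi$ with the same multiplicity, so that theorem only locates the zeros in the open strip.
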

\begin{proof}
Analytic continuation and the functional equation are classical independent
inputs \cite{DLMFZetaDef,DLMFZetaFE,DLMFZetaZeros}.
They cancel the trivial zeros and apparent poles in the completed
expression and identify its remaining zeros.  The order upper bound can
be obtained without a zero product: the Dirichlet series on $\Re s\geq2$,
Euler--Maclaurin continuation in a fixed vertical strip, Stirling's
estimate for the Gamma factor, and the functional equation on the other
half-plane give
\[
 \log\max\{1,M_\xi(r)\}=O(r\log(r+2)).
\]
This is the usual order-one estimate; Euler--Maclaurin continuation and
the asymptotic formula are in \cite{DLMFZetaDef,DLMFGammaAsymptotic}.
Finally, for real $0<\sigma<1$,
\[
 \zeta(\sigma)=\frac{\eta(\sigma)}{1-2^{1-\sigma}},\qquad
 \eta(\sigma)=\sum_{n\geq1}(-1)^{n-1}n^{-\sigma}>0.
\]
The last strict inequality follows by pairing successive positive and
negative terms.  Hence $\zeta(1/2)\ne0$ and so $\xi(1/2)\ne0$.
\end{proof}

\subsection{Theta zero data from transformations, not the triple product}
\begin{lemma}[One zero per cell]\label{lem:one-cell}
Let $T,L\in\C$ satisfy $\Im(L/T)>0$.  Suppose a nonzero entire function
$U$ obeys
\[
 U(w+T)=aU(w),\qquad
 U(w+L)=b e^{-2\pi\ii w/T}U(w),\qquad a,b\ne0.
\]
Then $U$ has exactly one zero, counted with multiplicity, in each
fundamental cell modulo $T\Z+L\Z$.
\end{lemma}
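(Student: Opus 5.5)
The count comes from the argument principle applied to $U'/U$ around the boundary of a fundamental parallelogram, with the boundary integrals computed exactly from the two multiplier relations.

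\textbf{Choice of cell.} The zeros of the nonzero entire function $U$ are isolated. Both relations show that the zero set, with multiplicities, is invariant under $T\Z+L\Z$. Fix a base point $w_0$ such that no zero lies on the boundary of
\[
\Pi=\{w_0+xT+yL:0\le x,y\le1\}.
\]
This is possible because the zero set meets any compact region in finitely many points, and we can translate $w_0$ slightly. Since the zero set is periodic, every half-open fundamental cell has the same number of zeros, counted with multiplicity, as $\Pi$. So it suffices to show that
\[
N=\frac1{2\pi\ii}\oint_{\partial\Pi}\frac{U'(w)}{U(w)}\dd w
\]
equals $1$. The condition $\Im(L/T)>0$ means that the path $w_0\to w_0+T\to w_0+T+L\to w_0+L\to w_0$ is positively oriented.

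\textbf{Logarithmic derivatives.} Differentiating the two relations gives
\[
\frac{U'}{U}(w+T)=\frac{U'}{U}(w),\qquad
\frac{U'}{U}(w+L)=\frac{U'}{U}(w)-\frac{2\pi\ii}{T}.
\]
The constants $a,b$ drop out.

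\textbf{Pairing opposite sides.} The sides $[w_0,w_0+L]$ and $[w_0+T,w_0+T+L]$ are traversed in opposite directions. By $T$-periodicity of $U'/U$ their integrals cancel.

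The bottom side $[w_0,w_0+T]$ is traversed forward and the top side $[w_0+L,w_0+L+T]$ backward. Their combined contribution is
\[
\int_{w_0}^{w_0+T}\Bigl(\frac{U'}{U}(w)-\frac{U'}{U}(w+L)\Bigr)\dd w
=\int_{w_0}^{w_0+T}\frac{2\pi\ii}{T}\dd w=2\pi\ii .
\]
Hence $N=1$.

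\textbf{Main point of care.} The only delicate issues are orientation and the sign of the correction term. The multiplier $e^{-2\pi\ii w/T}$ contributes $-2\pi\ii/T$ to $U'/U$ under $w\mapsto w+L$, and the top side is traversed backward. Together these give $+2\pi\ii$, which is consistent with $N\ge0$.

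The conclusion is also forced by nonnegativity. Had the computation given a negative count, the hypotheses would be inconsistent with $U\neq0$; the stated sign convention yields exactly one zero. The final transfer from $\Pi$ to an arbitrary fundamental cell uses the invariance of the zero set under $T\Z+L\Z$ established in the first step.
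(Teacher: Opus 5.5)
Your proof is correct and follows the paper's argument: the argument principle on a translated cell with no zeros on its boundary, the relations $V(w+T)=V(w)$ and $V(w+L)=V(w)-2\pi\ii/T$ for $V=U'/U$, cancellation of the edges parallel to $L$, and a total contribution of $2\pi\ii$ from the edges parallel to $T$. You also spell out the orientation, the choice of base point, and the transfer to an arbitrary fundamental cell, which the paper leaves implicit.
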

\begin{proof}
Choose the boundary of a translated cell to avoid zeros, and write
$V=U'/U$.  Logarithmic differentiation gives
\[
 V(w+T)=V(w),\qquad V(w+L)=V(w)-2\pi\ii/T.
\]
The two edges parallel to $L$ cancel in the contour integral of $V$.
The difference on the two edges parallel to $T$ integrates to $2\pi\ii$.
The argument principle therefore counts one zero and no poles.
\end{proof}

\begin{lemma}\label{lem:theta-data}
The functions $F(z;q)$ and $P(z;q)$ in Section~\ref{ex:jacobi} converge
normally on compact subsets of $\C^\times$, satisfy
\[
 U(qz;q)=-z^{-1}U(z;q),
\]
and have exactly the simple zeros $q^m$, $m\in\Z$.  The defining series
of $\theta_1$ is entire, has exactly the simple zeros
$\pi\Z+\pi\tau\Z$, and satisfies
\[
 \theta_1(z+\pi)=-\theta_1(z),\qquad
 \theta_1(z+\pi\tau)=-q^{-1}e^{-2\ii z}\theta_1(z).
\]
The product side of \eqref{eq:theta} has the same transformations and
simple zero multiset.
\end{lemma}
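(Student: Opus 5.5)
Lemma~\ref{lem:theta-data} splits into three groups of checks: convergence, the transformation laws, and the exact zero sets. The zero sets will come from Lemma~\ref{lem:one-cell}, never from the product identity being proved.

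\textbf{Convergence and transformations.} On a compact set $K\subset\C^\times$, put $R=\max_K|z|$ and $r=\min_K|z|$. The terms of $F$ are bounded by $|q|^{n(n-1)/2}\max(R,r^{-1})^{|n|}$. Since this is quadratic decay in the exponent, the series converges normally. For $P$, the factors $1-zq^j$ and $1-q^{j+1}/z$ differ from $1$ by $O(|q|^j)$ uniformly on $K$, so the product converges normally.

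For the transformation of $F$, shift $n\mapsto n-1$ in $F(qz)=\sum(-1)^nq^{n(n+1)/2}z^n$. This gives $-z^{-1}F(z)$. For $P$, a direct computation gives $(qz;q)_\infty=(z;q)_\infty/(1-z)$ and $(1/z;q)_\infty=(1-1/z)(q/z;q)_\infty$. Hence
\[
P(qz)=P(z)\,\frac{1-1/z}{1-z}=-z^{-1}P(z).
\]

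For $\theta_1$, convergence on compacta follows from $|q^{(n+1/2)^2}e^{\pm(2n+1)\ii z}|\le |q|^{(n+1/2)^2}e^{(2n+1)|\Im z|}$. I will rewrite $\theta_1$ as the bilateral series $-\ii\sum_{n\in\Z}(-1)^nq^{(n+1/2)^2}e^{(2n+1)\ii z}$. Then $z\mapsto z+\pi$ gives the sign $-1$ termwise. For $z\mapsto z+\pi\tau$, multiplying the $n$th term by $e^{(2n+1)\ii\pi\tau}=q^{2n+1}$ and shifting the index gives the factor $-q^{-1}e^{-2\ii z}$. The product side of \eqref{eq:theta} is handled the same way. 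I will write $\sin z$ times the product as a constant times $e^{\ii z}(e^{-2\ii z};q^2)_\infty(q^2e^{2\ii z};q^2)_\infty$. This is essentially $P(e^{-2\ii z};q^2)$ up to an exponential factor, so its transformations follow from the $P$ computation with base $q^2$.

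\textbf{Zero counts.} I will lift with $w$, writing $z=e^w$ for $F,P$, and set $U(w)=F(e^w;q)$. Then $U(w+2\pi\ii)=U(w)$ and $U(w+\ell)=-e^{-w}U(w)$. Taking $T=2\pi\ii$ and $L=\ell=\log q$, the second law reads $U(w+L)=b\,e^{-2\pi\ii w/T}U(w)$ with $b=-1$. This uses the branch with $\Im(L/T)>0$, which holds because $\Re\ell<0$. Lemma~\ref{lem:one-cell} then gives exactly one zero per cell. Next I need to locate it: $F(1;q)=0$, by pairing $n$ with $1-n$ (the exponent $n(n-1)/2$ is invariant and the signs are opposite). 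Hence the zeros are exactly $w\in\ell\Z+2\pi\ii\Z$, i.e.\ $z=q^m$, all simple. For $P$ the zeros can be read directly from its factors: $1-zq^j=0$ iff $z=q^{-j}$, and $1-q^{j+1}/z=0$ iff $z=q^{j+1}$. Each $q^m$ occurs exactly once, and $q^m$ are distinct since $|q|<1$. For $\theta_1$, take $T=\pi$ and $L=\pi\tau$. The multiplier $-q^{-1}e^{-2\ii z}$ equals $b\,e^{-2\pi\ii z/T}$, so Lemma~\ref{lem:one-cell} applies, and $\theta_1(0)=0$ by oddness. This gives the zero set $\pi\Z+\pi\tau\Z$, all simple. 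The product side has the zeros of $\sin z$ together with those of $1-2q^{2n}\cos2z+q^{4n}=(1-q^{2n}e^{2\ii z})(1-q^{2n}e^{-2\ii z})$, which are again the simple points $\pi\Z+\pi\tau\Z$.

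\textbf{Main obstacle.} The delicate step is matching the sign and branch conventions so that the multiplier really takes the form required by Lemma~\ref{lem:one-cell}. The ordering of $T$ and $L$ must give $\Im(L/T)>0$; otherwise the contour integral counts $-1$, which would signal reversed orientation. I will verify this first for $\theta_1$, where $\Im\tau>0$ settles it. Then I will check $L/T=\ell/(2\pi\ii)$, whose imaginary part is $-\Re\ell/(2\pi)>0$.
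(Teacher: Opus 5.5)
Your proposal is correct and follows essentially the paper's proof. It reindexes $F$ and the bilateral form of $\theta_1$, cancels first factors in $P$, and applies Lemma~\ref{lem:one-cell} with $(T,L)=(2\pi\ii,\ell)$ and $(\pi,\pi\tau)$, locating the zeros at $z=1$ (pairing $n\leftrightarrow 1-n$) and $z=0$ (oddness). It then reduces the theta product side to $P$ with base $q^2$; your $\frac{e^{\ii z}}{2\ii}P(e^{-2\ii z};q^2)$ equals the paper's $\frac{\ii}{2}e^{-\ii z}P(e^{2\ii z};q^2)$. The one missing check is that Lemma~\ref{lem:one-cell} requires a nonzero function: say explicitly, as the paper does, that $F\not\equiv0$ because its $z^0$ coefficient is $1$, and that $\theta_1\not\equiv0$ (for instance, its $e^{-\ii z}$ coefficient $\ii q^{1/4}$ is nonzero).
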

\begin{proof}
Quadratic decay of $q^{n(n-1)/2}$ gives convergence of both tails of $F$;
geometric decay gives normal convergence of $P$.  Reindexing $F$ and
cancelling first factors in $P$ give their common transformation.
The zeroth Laurent coefficient of $F$ is $1$, so $F$ is not identically
zero.  Pairing $n$ with $1-n$ proves $F(1;q)=0$.  Its lift $F(e^w;q)$
satisfies Lemma~\ref{lem:one-cell} with $T=2\pi\ii$, $L=\ell=\log q$,
$a=1$, $b=-1$; indeed $\Im(\ell/(2\pi\ii))>0$.
The zero at zero and its translates exhaust its zeros by the cell count,
and they are simple.  The product $P$ has exactly these simple zeros by
its individual factors.

For theta, the series converges normally, is odd and nonzero, and its
bilateral reindexing gives the displayed transformations.  Apply
Lemma~\ref{lem:one-cell} with $T=\pi$, $L=\pi\tau$, $a=-1$,
$b=-q^{-1}$.  Oddness supplies the zero at zero, so its lattice
translates are all the zeros and are simple.  These properties agree
with the standard theta conventions \cite{DLMFThetaDef}.

To check the product side independently, set $Q=q^2$, $w=e^{2\ii z}$,
and
\[
 U(z)=\sin z\prod_{n\geq1}
             (1-q^{2n}e^{2\ii z})(1-q^{2n}e^{-2\ii z}).
\]
Product splitting gives
\begin{equation}\label{eq:U-to-P}
 U(z)=\frac{\ii}{2}e^{-\ii z}P(w;Q).
\end{equation}
The transformation of $P$ gives exactly the two theta transformations
for $U$.  Its factors also give precisely the stated lattice of simple
zeros.  Multiplication by the scalar in \eqref{eq:theta} changes none
of this data.  No series-product identity has been used to count the
series' zeros.
\end{proof}

\subsection{Lattice functions before their addition and cubic identities}
\begin{lemma}\label{lem:elliptic-data}
For $\Lambda=2\omega_1\Z+2\omega_2\Z$, define
\begin{align}
 \wp(z)&=z^{-2}+\sum_{\lambda\in\Lambda\setminus\{0\}}
             \bigl((z-\lambda)^{-2}-\lambda^{-2}\bigr),\label{eq:wp-definition}\\
 \sigma(z)&=z\prod_{\lambda\in\Lambda\setminus\{0\}}E_2(z/\lambda).
 \label{eq:sigma-definition}
\end{align}
Then $\sigma$ is odd, has simple zeros exactly at $\Lambda$, and
$\sigma(z)=z+O(z^3)$.  The function $\wp$ is even and elliptic, with
one double pole per cell; $\wp'$ is odd and elliptic with simple zeros
exactly at the three nonzero half-period classes.  The $e_j$ are distinct,
$\wp-e_j$ has a double zero only at $\omega_j$ modulo $\Lambda$, and
for $v\notin\Lambda$, $\wp(z)-\wp(v)$ has zeros $v,-v$ modulo $\Lambda$, with the coinciding
case counted twice.  Finally
\[
 \sigma(z+2\omega_j)=-e^{2\eta_j(z+\omega_j)}\sigma(z),\qquad
 (\sigma'/\sigma)'=-\wp,
\]
for constants $\eta_j$.
\end{lemma}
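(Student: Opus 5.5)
The plan is to treat $\sigma$ first, then $\wp$ and $\wp'$, and to finish with the zero counts from Liouville and the argument principle on a fundamental cell. For $\sigma$, the identity $|\log E_2(w)|\leq2|w|^3$ for $|w|\leq1/2$ (proof of Proposition~\ref{prop:coordinates}) and $\sum_{\lambda\ne0}|\lambda|^{-3}<\infty$ give normal convergence of \eqref{eq:sigma-definition}, with simple zeros exactly at $\Lambda$. Since $\Lambda=-\Lambda$, the product is invariant under $z\mapsto-z$, so $\sigma$ is odd. The factors satisfy $E_2(w)=1+O(w^3)$, which gives $\sigma(z)=z+O(z^3)$. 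Termwise logarithmic differentiation yields
\[
 \mathcal Z(z):=\frac{\sigma'(z)}{\sigma(z)}=\frac1z+\sum_{\lambda\ne0}\Bigl(\frac1{z-\lambda}+\frac1\lambda+\frac z{\lambda^2}\Bigr),
\]
and one more differentiation gives $\mathcal Z'=-\wp$ with $\wp$ as in \eqref{eq:wp-definition}. This also shows that the series for $\wp$ converges normally away from $\Lambda$ and is even, and that $\wp'(z)=-2\sum_{\lambda}(z-\lambda)^{-3}$ is odd and visibly $\Lambda$-periodic.

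Periodicity of $\wp$ follows in the usual way: $\wp(z+2\omega_j)-\wp(z)$ is constant because its derivative vanishes, and evaluating at $z=-\omega_j$ with evenness shows the constant is zero. Each cell therefore contains exactly one double pole. For the sigma transformation, $\mathcal Z(z+2\omega_j)-\mathcal Z(z)$ is constant, say $2\eta_j$, so $\sigma(z+2\omega_j)=Ce^{2\eta_jz}\sigma(z)$. Substituting $z=-\omega_j$ and using oddness together with $\sigma(\omega_j)\ne0$ gives $C=-e^{2\eta_j\omega_j}$.

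The zero counts follow next. A nonconstant elliptic function has as many zeros as poles per cell, since the contour integral of $F'/F$ cancels on opposite edges. Applied to $\wp'$, which has a triple pole, there are three zeros per cell. Oddness and periodicity give $\wp'(\omega_j)=-\wp'(-\omega_j)=-\wp'(\omega_j)$, so each $\omega_j$ with $j=1,2,3$ is a zero. These points are pairwise incongruent, so they are exactly the zeros, and each is simple. The function $\wp-e_j$ has a double pole, hence two zeros per cell; it vanishes to order at least two at $\omega_j$ because $\wp'(\omega_j)=0$, so that double zero is its only zero. If $e_i=e_j$ for $i\ne j$, then $\wp-e_i$ would have four zeros, a contradiction, so the $e_j$ are distinct. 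Finally, $\wp(z)-\wp(v)$ has two zeros per cell, at $v$ and $-v$ by evenness. When $v\not\equiv-v$ these are distinct and give the full count. When $2v\in\Lambda$ with $v\notin\Lambda$, $v$ is a half-period and the preceding case gives a double zero, which is counted twice.

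The main obstacle is rigor in the analytic step: justifying the termwise differentiation and the constancy arguments, which rest on conditional-type sums over the lattice. I would handle this through local uniform convergence of the paired summands $(z-\lambda)^{-2}-\lambda^{-2}=O(|\lambda|^{-3})$ on compact sets, and I would choose contour cells that avoid zeros and poles so that the argument principle applies.
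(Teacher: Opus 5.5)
Your proposal is correct and follows the paper's proof essentially step for step. The shared steps are normal convergence from $\sum_{\lambda\ne0}|\lambda|^{-3}<\infty$, parities by pairing $\lambda$ with $-\lambda$, and periodicity of $\wp$ via a constant difference that vanishes at $z=-\omega_j$. They continue with the argument-principle zero count for $\wp'$, $\wp-e_j$ and $\wp-\wp(v)$, and the sigma multiplier obtained from the constant $\mathcal Z(z+2\omega_j)-\mathcal Z(z)=2\eta_j$ evaluated at $-\omega_j$ using oddness. The differences are cosmetic: you prove the sigma transformation before the zero counts, and you integrate the logarithmic derivative on $\C\setminus\Lambda$ instead of first noting that $\sigma(z+2\omega_j)/\sigma(z)$ is an entire unit.
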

\begin{proof}
Lattice-point counting gives $\#\{\lambda:|\lambda|\leq r\}=O(r^2)$
and $\sum_{\lambda\ne0}|\lambda|^{-3}<\infty$.  The two definitions
therefore converge normally away from the indicated poles, and the
product has exactly its stated zeros.  Pairing opposite points gives
the two parities and, near zero,
\begin{equation}\label{eq:wp-local}
 \wp(z)=z^{-2}+a z^2+b z^4+O(z^6),\quad
 a=3\sum_{\lambda\ne0}\lambda^{-4},\quad
 b=5\sum_{\lambda\ne0}\lambda^{-6}.
\end{equation}
These definitions and local normalizations are the standard ones
\cite{DLMFEllipticDef,DLMFEllipticLaurent}.

Differentiation gives the absolutely convergent periodic sum
$\wp'(z)=-2\sum_{\lambda\in\Lambda}(z-\lambda)^{-3}$.
Thus $\wp(z+2\omega_j)-\wp(z)$ is constant.  Evaluate at
$z=-\omega_j$ using evenness to make it zero.  An elliptic function
has equal total zero and pole multiplicities on its torus: the opposite
boundary contributions of its logarithmic derivative cancel in the
argument principle.  Since $\wp'$ has one triple pole and vanishes at
each nonzero half-period by oddness, these are exactly three simple zeros.
It follows that $\wp-e_j$ has a double zero at $\omega_j$.  Its one double
pole leaves no further zeros, and forces the three $e_j$ to be distinct.
Evenness and the same zero count give the assertion for general $v$.

Logarithmic differentiation of \eqref{eq:sigma-definition} gives
$\mathcal Z'= -\wp$ for $\mathcal Z=\sigma'/\sigma$.
Consequently $\mathcal Z(z+2\omega_j)-\mathcal Z(z)=2\eta_j$ is constant.
Local cancellation makes $\sigma(z+2\omega_j)/\sigma(z)$ an entire
unit, with logarithmic derivative $2\eta_j$; it is $C_je^{2\eta_jz}$.
Evaluate at $z=-\omega_j$ and use oddness to find
$C_j=-e^{2\eta_j\omega_j}$.  This proves the transformations without an
addition theorem.
\end{proof}

After the cubic has been proved by DCT, its expanded coefficients follow
from \eqref{eq:wp-local}.  In a monic cubic in $\wp$, comparison of the
$z^{-4}$ coefficient gives $e_1+e_2+e_3=0$, and comparison of the next
two coefficients gives $g_2=20a$ and $g_3=28b$, exactly the sums in
\eqref{eq:cubic-expanded}.  This subsequent coefficient calculation is
not used in the preceding zero count.

\section{Scalar normalizations}
\label{app:scalars}
The main comparisons leave a constant, or in the Gamma product a linear
polynomial.  This appendix records the less immediate scalar data rather
than hiding them in the fourth step.

\subsection{The finite Gamma normalization for Gauss multiplication}
\begin{lemma}\label{lem:gamma-scalar}
For every positive integer $n$,
\begin{equation}\label{eq:gamma-scalar}
 \prod_{r=1}^{n-1}\Gamma(r/n)=(2\pi)^{(n-1)/2}n^{-1/2}.
\end{equation}
This follows from reflection, not from Gauss multiplication.
\end{lemma}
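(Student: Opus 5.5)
Square the product and pair each factor $\Gamma(r/n)$ with $\Gamma((n-r)/n)$, which reindexing allows. Writing $P_n=\prod_{r=1}^{n-1}\Gamma(r/n)$, this gives
\[
 P_n^2=\prod_{r=1}^{n-1}\Gamma(r/n)\Gamma(1-r/n).
\]
Apply the reflection formula \eqref{eq:reflection}, already proved in Section~\ref{ex:reflection} without any multiplication identity, to each factor at $z=r/n$. Since $r/n\notin\Z$, the formula gives
\[
 P_n^2=\frac{\pi^{n-1}}{\prod_{r=1}^{n-1}\sin(\pi r/n)}.
\]
The case $n=1$ is the empty product and holds trivially, so assume $n\geq2$.

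The main step is the finite trigonometric product
\[
 \prod_{r=1}^{n-1}\sin(\pi r/n)=\frac{n}{2^{n-1}},
\]
which I would derive from roots of unity. Start from
\[
 \frac{w^n-1}{w-1}=\prod_{r=1}^{n-1}(w-\zeta^r),\qquad \zeta=e^{2\pi\ii/n}.
\]
Setting $w=1$ gives $n=\prod_{r=1}^{n-1}|1-\zeta^r|$ after taking absolute values. Next,
\[
 |1-e^{2\pi\ii r/n}|=2\sin(\pi r/n),
\]
with the right side positive for $1\leq r\leq n-1$. Together these give the claimed value of the sine product.

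Substituting this value yields
\[
 P_n^2=\frac{(2\pi)^{n-1}}{n}.
\]
Finally, take the positive square root. This is legitimate because $\Gamma(x)>0$ for $x>0$ by Lemma~\ref{lem:gamma-data}, so $P_n>0$ and therefore $P_n=(2\pi)^{(n-1)/2}n^{-1/2}$.

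None of these steps is a real obstacle. The only points needing care are the correct sign choice at the square root, which positivity settles, and confirming that the modulus identity $|1-\zeta^r|=2\sin(\pi r/n)$ uses the positive sine branch throughout the range of $r$.
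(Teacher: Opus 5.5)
Your proposal is correct and follows the paper's proof essentially step for step. The steps match: the roots-of-unity evaluation $\prod_{r=1}^{n-1}2\sin(\pi r/n)=n$, the reindexed pairing with reflection giving $P_n^2=(2\pi)^{n-1}/n$, and the positive square root justified by $\Gamma(x)>0$ for $x>0$ (the paper simply establishes the sine product before applying reflection).
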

\begin{proof}
The case $n=1$ is the empty product.  For $n\geq2$, factor
$(x^n-1)/(x-1)$ over the nontrivial $n$th roots of unity and evaluate
at $x=1$.  Taking absolute values gives
\[
 \prod_{r=1}^{n-1}2\sin(\pi r/n)=n.
\]
Let $B=\prod_{r=1}^{n-1}\Gamma(r/n)>0$.  Reversing the factors and using
the proved reflection identity \eqref{eq:reflection} gives
\[
 B^2=\prod_{r=1}^{n-1}\Gamma(r/n)\Gamma(1-r/n)
 =\prod_{r=1}^{n-1}\frac{\pi}{\sin(\pi r/n)}
 =\frac{(2\pi)^{n-1}}n.
\]
The positive square root proves the formula.  Thus the dependency is
reflection $\to$ scalar normalization $\to$ Gauss multiplication,
not a circle.
\end{proof}

\subsection{One common coefficient lemma for the two theta products}
\begin{lemma}\label{lem:q-scalar}
For $0<|q|<1$,
\begin{equation}\label{eq:P-constant}
 [z^0]\{(z;q)_\infty(q/z;q)_\infty\}=(q;q)_\infty^{-1}.
\end{equation}
It follows that the product side of \eqref{eq:jacobi} has constant Laurent
coefficient $1$, and that the product side of \eqref{eq:theta} has
coefficient $\ii q^{1/4}$ at $e^{-\ii z}$.
\end{lemma}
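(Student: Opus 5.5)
We prove \eqref{eq:P-constant} first and then read off the two consequences. Throughout write $P(z)=P(z;q)=(z;q)_\infty(q/z;q)_\infty$. By Lemma~\ref{lem:theta-data}, $P$ converges normally on compact subsets of $\C^\times$ and satisfies $P(qz)=-z^{-1}P(z)$. Hence $P$ has a Laurent expansion $P(z)=\sum_{n\in\Z}c_nz^n$ on $\C^\times$. Comparing coefficients in the functional equation gives $c_nq^n=-c_{n+1}$, so
\[
 c_n=(-1)^nq^{n(n-1)/2}c_0 .
\]
Thus $P(z)=c_0F(z;q)$. Everything reduces to identifying the single scalar $c_0$, which is the main obstacle. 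The zero data alone cannot determine it, so a genuine computation is needed.

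\emph{Computing $c_0$.} I would evaluate $c_0$ by a limiting specialization rather than a direct expansion. The cleanest route, which I would try first, uses the finite version
\[
 P_N(z)=\prod_{r=0}^{N-1}(1-zq^r)\prod_{r=1}^{N}(1-q^r/z).
\]
Its Laurent coefficients are given exactly by the $q$-binomial theorem:
\[
 P_N(z)=\sum_{n=-N}^{N}(-1)^nq^{n(n-1)/2}\begin{bmatrix}2N\\N+n\end{bmatrix}_q z^n .
\]
This is a finite polynomial identity, proved by the standard induction or by Cauchy's $q$-binomial formula. In particular its constant term is $\frac{(q;q)_{2N}}{(q;q)_N^2}$.

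As $N\to\infty$ we have $P_N\to P$ uniformly on $|z|=1$, because $(q;q)_\infty$-type tails converge geometrically. Constant terms are integrals over the unit circle, so they converge, and
\[
 c_0=\lim_{N\to\infty}\frac{(q;q)_{2N}}{(q;q)_N^2}=\frac{(q;q)_\infty}{(q;q)_\infty^2}=(q;q)_\infty^{-1}.
\]
This proves \eqref{eq:P-constant}. If one prefers to avoid the $q$-binomial bookkeeping, an alternative is to expand both infinite products with Euler's identities, $(z;q)_\infty=\sum_k(-1)^kq^{k(k-1)/2}z^k/(q;q)_k$ and similarly for $(q/z;q)_\infty$. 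The constant term then becomes a bilateral sum, which is evaluated by Cauchy's identity; I would fall back to this route only if needed.

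\emph{Consequences.} The product side of \eqref{eq:jacobi} is $(q;q)_\infty P$, so its $z^0$ coefficient is $1$ at once.

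For \eqref{eq:theta}, apply \eqref{eq:U-to-P} with base $Q=q^2$ and $w=e^{2\ii z}$. The right side of \eqref{eq:theta} equals
\[
 2q^{1/4}(q^2;q^2)_\infty\,U(z)=\ii q^{1/4}(q^2;q^2)_\infty\,e^{-\ii z}P(e^{2\ii z};q^2).
\]
The factor $e^{-\ii z}$ multiplies $w^0$ to give exactly $e^{-\ii z}$, and distinct powers $w^n$ give distinct frequencies $e^{(2n-1)\ii z}$. Therefore the coefficient of $e^{-\ii z}$ is
\[
 \ii q^{1/4}(q^2;q^2)_\infty(q^2;q^2)_\infty^{-1}=\ii q^{1/4},
\]
as claimed. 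Here uniqueness of the Fourier expansion of a $\pi$-antiperiodic entire function justifies reading off this coefficient.
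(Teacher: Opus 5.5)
Your proof is correct, but it evaluates the scalar by a different route from the paper. The paper multiplies Euler's expansions $(t;q)_\infty=\sum_{r\geq0}(-1)^rq^{r(r-1)/2}t^r/(q;q)_r$ of the two factors on a circle. This gives $[z^0]P(z;q)=\sum_{r\geq0}q^{r^2}/(q;q)_r^2$, a one-sided sum rather than the bilateral one mentioned in your fallback remark. The paper then evaluates that series by the Durfee-square decomposition of partitions, \eqref{eq:durfee}, and passes from a formal to an analytic identity by absolute convergence. Your fallback route is therefore essentially the paper's proof.

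Your main route instead truncates the product and reads off the constant term $(q;q)_{2N}/(q;q)_N^2$ from the finite $q$-binomial theorem. It then passes to the limit through the integral over $|z|=1$. This uses only finite polynomial identities and one uniform limit, avoiding both the Euler expansion and the partition bijection with its formal-to-analytic step. The paper's route, in exchange, exhibits the constant term as an explicit series before evaluating it. You derive the two consequences exactly as the paper does, via \eqref{eq:U-to-P} with base $q^2$.

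Your first step, the recurrence $c_{n+1}=-q^nc_n$ giving $P=c_0F$, is correct but unnecessary, since the lemma concerns only $c_0$. Combined with your value of $c_0$, it in fact proves \eqref{eq:jacobi} outright by the classical coefficient-recurrence argument. The paper deliberately confines this lemma to a single scalar, so that the $z$-dependence of the triple product comes from the DCT~3 comparison. This is a remark about the paper's organization, not a flaw in your proof of the lemma.
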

\begin{proof}
First derive Euler's expansion without using a triple product:
\begin{equation}\label{eq:euler-q}
 (t;q)_\infty=\sum_{r\geq0}
     \frac{(-1)^r q^{r(r-1)/2}}{(q;q)_r}t^r,
 \qquad (q;q)_r=\prod_{j=1}^r(1-q^j).
\end{equation}
Indeed, if $A(t)=(t;q)_\infty=\sum a_rt^r$, then
$A(t)=(1-t)A(qt)$ and $A(0)=1$ imply
$(1-q^r)a_r=-q^{r-1}a_{r-1}$.  Iteration gives \eqref{eq:euler-q},
whose quadratic exponents ensure entire convergence.  This is the
standard Euler expansion \cite{DLMFqBinomial}.

Multiply the two expansions on any circle in $\C^\times$, where both
are absolutely convergent.  The constant coefficient is
\[
 [z^0]P(z;q)=\sum_{r\geq0}\frac{q^{r^2}}{(q;q)_r^2}.
\]
To evaluate this scalar, partition an integer partition diagram by its
largest upper-left square of size $r\times r$.  Removing that square
leaves independently a diagram with at most $r$ rows on its right and
one with row lengths at most $r$ below.  Each remainder has generating
series $(q;q)_r^{-1}$, after transposing the first diagram.
The square contributes $q^{r^2}$, so
\begin{equation}\label{eq:durfee}
 \sum_{r\geq0}\frac{q^{r^2}}{(q;q)_r^2}
     =\prod_{j\geq1}(1-q^j)^{-1}.
\end{equation}
This is an identity of formal power series.  Absolute convergence for
$|q|<1$ makes it analytic: the left side is bounded by a constant times
$\sum |q|^{r^2}$ and the partition series by its convergent product at
$|q|$.  Restricted-partition generating functions are described in
\cite{DLMFPartitions}.  Equations \eqref{eq:P-constant} and
\eqref{eq:durfee} follow.

For theta put $Q=q^2$, $w=e^{2\ii z}$.  By \eqref{eq:U-to-P}, its full
product side is
\[
 g(z)=\ii q^{1/4}e^{-\ii z}(Q;Q)_\infty P(w;Q).
\]
The coefficient of $e^{-\ii z}$ is therefore $\ii q^{1/4}$.
The defining sine series has precisely the same coefficient.  This
calculation normalizes both identities without assuming either.
\end{proof}

\subsection{The optional Barnes half-value and conventional constant}
\begin{lemma}\label{lem:barnes-scalar}
With $A$ the Glaisher--Kinkelin constant,
\begin{equation}\label{eq:Ghalf}
 G(1/2)=A^{-3/2}\pi^{-1/4}e^{1/8}2^{1/24}.
\end{equation}
Consequently the value-normalized identity \eqref{eq:barnes} has exactly
the conventional form \eqref{eq:barnes-standard}.
\end{lemma}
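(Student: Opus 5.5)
The plan is to get the half-value from the value-normalized duplication identity \eqref{eq:barnes}, which is already proved, by letting $z\to+\infty$ along the reals. In that limit every $z$-dependent term cancels, and the constant $G(1/2)^2$ is left equal to a combination of the asymptotic constants. So the only extra input is the classical Barnes asymptotic expansion, taken as independent analytic input in the same way as Stirling's formula in Lemma~\ref{lem:xi-data}. I would use it in the form
\[
 \log G(x+1)=\frac{x^2}{2}\log x-\frac{3x^2}{4}+\frac{x}{2}\log 2\pi-\frac1{12}\log x+\frac1{12}-\log A+o(1),\qquad x\to+\infty,
\]
with real logarithms. Positivity from Lemma~\ref{lem:barnes-data} makes every logarithm here real. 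Here $\frac1{12}-\log A=\zeta'(-1)$, which is the usual definition of Glaisher's constant, so the constant term may be read either way.

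First I take real logarithms of \eqref{eq:barnes} at $z=x>0$. I rewrite each factor as $G(y+1)$, with $y=x-1,\ x-\tfrac12,\ x,\ 2x-1$ respectively. Then I expand each $\log G(y+1)$ by the formula above. For the shifted arguments I write
\[
 \log(x+c)=\log x+\frac cx-\frac{c^2}{2x^2}+O(x^{-3}).
\]
This must be carried to second order, because it is multiplied by $y^2/2\sim x^2/2$. In the term with argument $2x-1$, I split $\log(2x)=\log 2+\log x$.

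Next I collect coefficients. The terms $x^2\log x$, $x\log x$, $\log x$, $x^2$ and $x$ on the left must match the right-hand side $(-2x^2+3x-1)\log2+x\log\pi+2\log G(1/2)$ plus the expansion of $\log G(2x)$. Those matches are a consistency check: if an intermediate term fails to cancel, there is a bookkeeping error. The constant terms then give a linear equation for $\log G(1/2)$. The left side contributes $3(\frac1{12}-\log A)$, since there are four $G$ factors on the left, counted with weight $2$ for $G(x+\tfrac12)$, against one on the right. To these add the constants $-\tfrac1{4}\cdot 2\log\pi$ and $\tfrac1{12}\log 2$ that come out of the $x\log 2\pi$ and $\tfrac{x^2}{2}\log 2$ pieces after the shifts. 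The target is
\[
 \log G(1/2)=\tfrac1{24}\log2-\tfrac14\log\pi+\tfrac32\Bigl(\tfrac1{12}-\log A\Bigr),
\]
and this is exactly \eqref{eq:Ghalf}, because $\tfrac32\cdot\tfrac1{12}=\tfrac18$.

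Finally the conversion is pure algebra. Squaring \eqref{eq:Ghalf} gives $G(1/2)^2=e^{1/4}A^{-3}\pi^{-1/2}2^{1/12}$. Substituting this into \eqref{eq:barnes} combines $2^{-2z^2+3z-1}\cdot2^{1/12}=2^{-2z^2+3z-11/12}$ and $\pi^z\pi^{-1/2}=\pi^{z-1/2}$, which is \eqref{eq:barnes-standard}.

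The main obstacle is the constant-term bookkeeping in the second step. The half-shift contributes cross terms of the form $\tfrac{(x-\frac12)^2}{2}\bigl(\log x-\tfrac1{2x}-\tfrac1{8x^2}\bigr)$, and the doubled argument $2x-1$ contributes $\log 2$ pieces. Either can silently move a $\tfrac18$ or a $\tfrac1{24}\log2$. I would organize the check by first verifying that the growing terms cancel exactly, and only then reading off the constants.
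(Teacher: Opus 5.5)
Your proposal is correct and is essentially the paper's proof: the paper also inserts the DLMF asymptotic for $\log G(t+1)$ into the already proved value-normalized identity \eqref{eq:barnes}. The only difference is packaging. The paper writes one Taylor formula for the main term at $t=ax+b$ and applies it with $(a,b)=(1,-1),(1,-\tfrac12),(1,0),(2,-1)$ and weights $1,2,1,-1$, reaching the same $2\log G(1/2)=3K-\tfrac12\log\pi+\tfrac1{12}\log2$ with $K=\tfrac1{12}-\log A$.

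One cosmetic correction to your bookkeeping remark concerns the origin of the $\tfrac1{12}\log 2$. The $\log 2$ constants from the $\tfrac t2\log 2\pi$ and $\tfrac{t^2}{2}\log t$ pieces add to $-\log 2$, and these cancel the factor $2^{-1}$ on the right. The surviving $\tfrac1{12}\log 2$ therefore comes from the $-\tfrac1{12}\log t$ term at $t=2x-1$ (weight $-1$), although the value you state is right.
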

\begin{proof}
For this optional scalar evaluation use the independent standard asymptotic
\cite[(5.17.5), (5.17.7)]{DLMFBarnes}:
\begin{equation}\label{eq:barnes-asym}
 \log G(t+1)=\left(\frac{t^2}{2}-\frac1{12}\right)\log t
 -\frac34t^2+\frac t2\log(2\pi)+K+o(1),\qquad
 K=\frac1{12}-\log A,
\end{equation}
as $t\to+\infty$.  Let $\mathcal A(t)$ denote the displayed main
expression and $L_0=\log(2\pi)$.  For fixed $a>0,b\in\mathbb R$,
Taylor expansion gives
\begin{align*}
 \mathcal A(ax+b)={}&\frac{a^2x^2}{2}(\log x+\log a-3/2)\\
 &+abx(\log x+\log a-1)+\frac{aL_0x}{2}\\
 &+\left(\frac{b^2}{2}-\frac1{12}\right)(\log x+\log a)
 +\frac{bL_0}{2}+K+o(1).
\end{align*}
Apply this at $(a,b)=(1,-1),(1,-1/2),(1,0),(2,-1)$, with weights
$1,2,1,-1$.  Thus for
$R(x)=G(x)G(x+1/2)^2G(x+1)/G(2x)$,
\[
 \log R(x)=-2x^2\log2+x(3\log2+\log\pi)
       +3K-\tfrac12\log\pi-\tfrac{11}{12}\log2+o(1).
\]
But the already proved value-normalized formula says exactly
\[
 R(x)=C\,2^{-2x^2+3x}\pi^x,\qquad C=G(1/2)^2/2.
\]
Comparison of constants gives
$C=e^{3K}\pi^{-1/2}2^{-11/12}$.
Take the positive square root of $2C$, since $G(1/2)>0$, to obtain
\eqref{eq:Ghalf}.  Substituting gives \eqref{eq:barnes-standard}.
\end{proof}
This asymptotic input evaluates a scalar.  It is not needed for the
four-step proof of the value-normalized Barnes identity.

\section{Reconciliation with the prescribed annular factors}
\label{app:conversion}
The applications never change the fixed $p_D$.  Here are explicit
formulas connecting their familiar products to that normalization.
For a signed divisor $D=\sum_a d(a)[a]$, write
\[
 \kappa_D=\frac{p_{D^+}}{p_{D^-}},\qquad
 D^+(a)=\max(d(a),0),\quad D^-(a)=\max(-d(a),0).
\]
Let $g_D(a)$ be the genus actually used at $a\ne0$: it is computed from
$D^+$ for a positive coefficient, from $D^-$ for a negative one, and is
zero for $0<|a|<2$.

\begin{proposition}[Fixed-genus conversion]\label{prop:conversion}
Fix an integer $m\geq0$ and suppose
\[
 \sum_{a\ne0}|d(a)|\,|a|^{-(m+1)}<\infty.
\]
Then the following series converges locally uniformly to an entire
function:
\begin{equation}\label{eq:beta-m}
 \beta_m(D)(z)=\sum_{a\ne0}d(a)
                  \bigl(H_m(z/a)-H_{g_D(a)}(z/a)\bigr).
\end{equation}
It satisfies
\begin{equation}\label{eq:fixed-genus-conversion}
 s_m(D)(z):=z^{d(0)}\prod_{a\ne0}E_m(z/a)^{d(a)}
                      =e^{\beta_m(D)(z)}\kappa_D(z).
\end{equation}
\end{proposition}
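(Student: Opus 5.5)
The identity comes from the elementary factorization $E_m(w)=E_{g}(w)\,e^{H_m(w)-H_{g}(w)}$, valid for every $w\ne1$ and all $m,g\geq0$ because $E_p(w)=(1-w)e^{H_p(w)}$. Write $g=g_D(a)$. Then each nonzero point contributes
\[
 E_m(z/a)^{d(a)}=E_{g_D(a)}(z/a)^{d(a)}\exp\bigl(d(a)(H_m(z/a)-H_{g_D(a)}(z/a))\bigr).
\]
The plan is to multiply these over the finitely many $a$ with $0<|a|<K$ and append $z^{d(0)}$, and then let $K\to\infty$.

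The finite product of the $E_{g_D(a)}(z/a)^{d(a)}$ factors, together with $z^{d(0)}$, is a partial product of $p_{D^+}/p_{D^-}=\kappa_D$. The origin is extracted separately in \eqref{eq:annular}, and on $0<|a|<2$ the genus is zero, in agreement with the definition of $g_D(a)$. So it remains to show that three objects converge locally uniformly along the exhaustion $|a|<K$:
\begin{itemize}
\item the partial products of $s_m(D)$;
\item the partial products of $\kappa_D$;
\item the partial sums of \eqref{eq:beta-m}.
\end{itemize}
Given these, the identity \eqref{eq:fixed-genus-conversion} passes to the limit, first away from the zeros and poles and then meromorphically.

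For $s_m(D)$, Proposition~\ref{prop:coordinates} gives $|\log E_m(w)|\leq2|w|^{m+1}$ for $|w|\leq1/2$. With the hypothesis $\sum|d(a)||a|^{-(m+1)}<\infty$, this yields normal convergence of the logarithmic tail on $|z|\leq R$. For $\kappa_D$, the same proposition, applied separately to $p_{D^+}$ and $p_{D^-}$, already gives convergence of both partial products.

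The main step is the locally uniform convergence of $\beta_m(D)$. Fix $R$ and consider $|a|\geq2R$, so that $|w|=|z/a|\leq1/2$. I would split into two cases.

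\emph{Case $g_D(a)\geq m$.} The summand is $-d(a)\sum_{j=m+1}^{g}w^j/j$. This is bounded by $2|d(a)||w|^{m+1}\leq 2|d(a)|R^{m+1}|a|^{-(m+1)}$, which is summable by the hypothesis.

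\emph{Case $g_D(a)<m$.} The summand is $d(a)\sum_{j=g+1}^{m}w^j/j$. Here the hypothesis does not help directly, since the exponent $g+1$ may be smaller than $m+1$. I expect this to be the main obstacle, and I would handle it with the genus rule \eqref{eq:genus}. As in the proof of Proposition~\ref{prop:coordinates}, set $r_k=p_k(D^\pm)+1$. The rule gives $N_k(D^\pm)\bigl(\log(k+1)/k\bigr)^{r_k}\leq k^{-2}$, and $N_k$ counts multiplicities, so it bounds $\sum_{a\in D_k}|d(a)|$ for the relevant sign. Since $\log(k+1)\geq1$ for $k\geq2$, the shell-$k$ contribution is at most
\[
 2R^{m}\,N_k\,k^{-r_k}\leq 2R^m k^{-2}\ \ \text{(for }R\geq1\text{)}.
\]
Summing over shells gives a convergent majorant. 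I would treat $D^+$ and $D^-$ separately, because the genus attached to $a$ is computed from whichever part contains it.

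Together, the two cases make the tail of \eqref{eq:beta-m} normally convergent on $|z|\leq R$. Hence $\beta_m(D)$ is entire, and the limiting identity \eqref{eq:fixed-genus-conversion} follows.
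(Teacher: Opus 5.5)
Your proof is correct and takes essentially the same route as the paper. Both arguments use the factorization $E_m=E_{g}\,e^{H_m-H_{g}}$, the hypothesis to control the $E_m$ tail, and the annular genus estimate to control the $E_{g_D(a)}$ tail, with $D^+$ and $D^-$ handled separately. The only difference is that the paper avoids your case split: it writes $H_m-H_{g_D(a)}=\log E_m-\log E_{g_D(a)}$ and sums the two tails separately, which covers both of your cases at once.
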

\begin{proof}
On a compact disk and for sufficiently large $|a|$,
$H_m(z/a)-H_{g_D(a)}(z/a)=\log E_m(z/a)-\log E_{g_D(a)}(z/a)$.
The first tail is absolutely summable by the hypothesis and the second
by the annular estimate.  Their difference converges normally; the
finitely many remaining terms are polynomials.  Exponentiate and use
the primary-factor formula to prove \eqref{eq:fixed-genus-conversion}.
Positive and negative products are treated separately before taking
the meromorphic quotient.
\end{proof}

For example, the products for sine, Bessel, and centered $\xi$ are
$s_1(D)$ after pairing $a,-a$.  Thus a candidate $g=C s_1(D)$ is
\emph{already} in the exact coordinates
\[
 t=([\log C+\beta_1(D)],D,\varnothing).
\]
The DCT proof establishes the same logarithmic class for the other side;
it does not need to calculate $\beta_1(D)$ explicitly.
Likewise $\sigma=e^{\beta_2(\Lambda)}p_\Lambda$, since
$\sum_{\lambda\ne0}|\lambda|^{-3}<\infty$.

For the reciprocal Gamma product let $D_-=[0]+\sum_{n\geq1}[-n]$ and
\[
 t_k=\left\lceil\frac{2\log k}{\log k-\log\log(k+1)}\right\rceil-1.
\]
Then
\[
 p_{D_-}=z(1+z)\prod_{k\geq2}E_{t_k}(-z/k),\qquad
 \beta_1(D_-)(z)=-z-\sum_{k\geq2}\sum_{j=2}^{t_k}\frac{(-z/k)^j}{j}.
\]
The term $-z$ is the small-shell conversion at $-1$.  The proved identity
is precisely
\[
 \Gamma(z)^{-1}=e^{\gamma z+\beta_1(D_-)(z)}p_{D_-}(z).
\]

\subsection{Recombining the Gamma multiplication factors}
For Gauss multiplication put
$P_{n,r}=\{-\ell-r/n:\ell\geq0\}$ and $P_n=\{-j/n:j\geq0\}$.
The common denominator in the four-step proof is
\begin{equation}\label{eq:gauss-annular}
 \begin{split}
 p_{P_n}(z)={}&z\prod_{j=1}^{2n-1}(1+nz/j)\\
 &\times\prod_{k\geq2}\prod_{j=nk}^{n(k+1)-1}
                  E_{t_k^{(n)}}(-nz/j),\\
 t_k^{(n)}={}&\left\lceil
 \frac{\log n+2\log k}{\log k-\log\log(k+1)}\right\rceil-1.
 \end{split}
\end{equation}
If instead the separate Gamma factors are first represented and then
multiplied ordinarily, their canonical denominators recombine as
\[
 p_{P_n}=e^{\delta_n}\prod_{r=0}^{n-1}p_{P_{n,r}},
\]
where
\[
 \delta_n(z)=\sum_{k\geq2}\sum_{j=nk}^{n(k+1)-1}
          \sum_{\ell=t_k^{(1)}+1}^{t_k^{(n)}}\frac{(-nz/j)^\ell}{\ell}.
\]
This is a finite polynomial: for fixed $n$, both genera equal $2$ for
all sufficiently large $k$, because
$\log n+3\log\log(k+1)<\log k$ eventually.  The four-step proof
represents the \emph{complete sides}, so this recombination is already
included in $h_s$ rather than being repeated in the comparison.

\section{Algebraic remarks on the representation}
\label{app:algebra}
This appendix retains the ring and field arguments.  They are not
additional prerequisites for DCT: the main text uses the ordinary field
$\MM$ and the original evaluation $\Phi(t)=q_t$ throughout.
The appendix is included as a complete algebraic companion to the proof
method, not as evidence for novelty of an abstract field. Its two
evaluations are distinguished explicitly below.

\subsection{The coordinate set and two multiplications}
Let $\DD$ be the group of locally finite signed divisors, and identify a
reduced triple with $([h],D)$, where $D=Z-P$.  Then
\[
 \TT=\{\zero\}\sqcup(\HH\times\DD),\qquad
 \Phi(\zero)=0,\quad \Phi([h],D)=e^h\kappa_D.
\]
Proposition~\ref{prop:coordinates} says that $\Phi$ is a bijection onto
$\MM$.  The direct coordinate multiplication is
\begin{equation}\label{eq:star}
 ([h],D)*([g],E)=([h+g],D+E),\qquad \zero*t=\zero.
\end{equation}
In triples one sums multiplicities and then takes positive and negative
parts, so common zeros and poles cancel.  The nonzero elements form an
abelian group, with identity $([0],0)$ and inverse $([-h],-D)$.
This follows immediately from the two additive coordinate groups.
The quotient $\HH$ is an additive group, not a quotient ring.

Define the normalized entire discrepancy $c(D,E)$ by
\begin{equation}\label{eq:cocycle}
 e^{c(D,E)}=\frac{\kappa_D\kappa_E}{\kappa_{D+E}},\qquad c(D,E)(0)=0.
\end{equation}
The quotient has the same leading coefficient $1$ at zero after the
common powers are removed, so the normalization is valid.  The function
$c$ is symmetric and satisfies
\[
 c(D,E)+c(D+E,F)=c(E,F)+c(D,E+F),\qquad
 c(D,0)=c(D,-D)=0.
\]
The exponentials on each side agree; evaluation at zero removes the
logarithmic constant.  This is the usual factor-set identity
\cite{Conrad}.

Ordinary multiplication of functions is instead
\begin{equation}\label{eq:ordinary-mult}
 ([h],D)\mathbin{\odot}([g],E)=([h+g+c(D,E)],D+E).
\end{equation}
Ordinary addition is obtained by forming
$F=e^h\kappa_D+e^g\kappa_E$.  If $F\equiv0$, the result is $\zero$;
otherwise it is
\[
 ([\log(F/\kappa_{D_3})],D_3),\qquad D_3=\Div(F).
\]
These operations form a field because they are transported through the
bijection $\Phi$ from the ordinary field $\MM$.
The divisor of a sum cannot in general be read from its two input
divisors: equal leading terms can cancel and new zeros can appear.

\begin{proposition}[Direct multiplication is not pointwise-distributive]
The law $*$ in \eqref{eq:star} is not distributive over ordinary
function addition in the $\Phi$-coordinates.
\end{proposition}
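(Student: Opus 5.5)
We have to show that the direct coordinate law $*$ fails to distribute over the ordinary (transported) addition. The cleanest route is an explicit counterexample: find $t_1,t_2,t_3\in\TT$ with
\[
 t_1*(t_2\oplus t_3)\neq (t_1*t_2)\oplus(t_1*t_3),
\]
where $\oplus$ is ordinary addition transported by $\Phi$. The key identity is that $\Phi(t*u)=e^{-c(D,E)}\Phi(t)\Phi(u)$ for $t=([h],D)$, $u=([g],E)$. This follows directly from \eqref{eq:star} and \eqref{eq:cocycle}, since $e^{h+g}\kappa_{D+E}=e^{h+g}\kappa_D\kappa_E e^{-c(D,E)}$. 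So $*$ is ordinary multiplication twisted by the discrepancy unit $e^{-c}$. Distributivity therefore fails as soon as the twist depends nontrivially on the second divisor in a way that does not survive addition.

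Step one is to choose a divisor $D$ and two divisors $E,E'$ with $c(D,E)\neq c(D,E')$ as entire functions. The simplest idea is to take $t_2,t_3$ with the same divisor $E$ but different exponents, say $t_2=([0],E)$ and $t_3=([0],E)$. Then the sum is $([\log 2],E)$, so $t_1*$ distributes trivially, since everything carries the same $c(D,E)$. The example must therefore instead mix divisors. Take $t_2=([0],0)$, the function $1$, and $t_3=([0],E)$, the function $\kappa_E$. The sum has some new divisor $E''=\Div(1+\kappa_E)$. On the right we get $e^{-c(D,0)}\kappa_D+e^{-c(D,E)}\kappa_D\kappa_E=\kappa_D(1+e^{-c(D,E)}\kappa_E)$, using $c(D,0)=0$. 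On the left we get $e^{-c(D,E'')}\kappa_D(1+\kappa_E)$. Equality would force $1+e^{-c(D,E)}\kappa_E = e^{-c(D,E'')}(1+\kappa_E)$.

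Step two is to make $c(D,E)\neq0$ with explicit shells, so that a contradiction can be read off. Concretely, choose $D$ and $E$ supported in a shell $k\geq2$ such that adding them raises $N_k$ and hence the genus $p_k$ in \eqref{eq:genus}. For instance, take $D=E=N[a]$ with $|a|=k$ and $N$ large enough that $p_k(2N[a])>p_k(N[a])$. Then $e^{c(D,E)}=E_{p}(z/a)^{2N}/E_{p'}(z/a)^{2N}=\exp\bigl(-2N\sum_{j=p+1}^{p'}(z/a)^j/j\bigr)$, a nonconstant polynomial exponent. Evaluating the putative identity at $z=0$, where all $c$ vanish by normalization, gives no contradiction. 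Comparing Taylor coefficients at $0$ of order $p+1$ does give one. Write $\kappa_E=E_p(z/a)^N=1+O(z^{N})$ near $0$; it is simpler to compare first-order behavior.

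The hard part is Step three: choosing the example so that the Taylor comparison is clean, since the unknown divisor $E''$ makes $c(D,E'')$ hard to compute. To avoid computing $E''$ at all, I would instead pick $t_2$ and $t_3$ so that the sum is $\zero$: take $t_3$ representing $-\Phi(t_2)$. Then the left side is $t_1*\zero=\zero$. The right side is $\Phi(t_1)\bigl(e^{-c(D,E_2)}\Phi(t_2)+e^{-c(D,E_3)}\Phi(t_3)\bigr)$, which vanishes only if $c(D,E_2)=c(D,E_3)$. So we need $\Phi(t_3)=-\Phi(t_2)$ with different divisors, which is impossible. The fallback is therefore a sum whose divisor is again computable. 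Take $t_2=([0],E)$ and $t_3=([\pi\ii],E)$-type variants, or take $t_3=([h],0)$ with $e^h$ chosen so that $\kappa_E+e^h$ has an obviously known divisor. For example, choose $E=[a]$ in a small shell, where $\kappa_E=1-z/a$, and $e^h=z/a$ via $t_3=([0],[0])$ times a scalar. Then the sum is $1$, with divisor $0$, and $c(D,0)=0$ on the left. On the right, the coefficients $e^{-c(D,[a])}$ and $e^{-c(D,[0])}$ appear. Choosing $D$ in a shell where adding the point $a$ changes nothing but adding the origin also changes nothing gives $c=0$. So the twist must come from the same shell: put $a$ with $|a|=k\ge2$ and $D=N[a']$ in that shell with threshold $N$. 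I would then verify that the resulting polynomial exponents differ, by comparing coefficients of $z^{p+1}$.
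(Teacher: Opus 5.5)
\textbf{Verdict.} Your reduction is correct, and your final triple does give a counterexample by the same mechanism as the paper: adding a second point to shell $2$ raises the genus from $2$ to $3$. As written, however, the proposal stops at exactly the step that carries the content.

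\textbf{The missing computation.} With $\Phi(t*u)=e^{-c(D,E)}\Phi(t)\Phi(u)$, $\Phi(t_2)=1-z/a$ and $\Phi(t_3)=z/a$, the sum is $([0],0)$. Since $c(D,0)=c(D,[0])=0$ (the origin is extracted separately and never changes a shell count), distributivity for this triple holds if and only if $c(D,[a])=0$. You never evaluate \eqref{eq:genus}, and ``I would then verify'' is the entire proof.

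The check is also not automatic. With $N=1$ and $a'\neq a$ in the same shell the exponent can cancel, e.g.\ $c([2],[-2])=-\tfrac13\bigl(2^{-3}+(-2)^{-3}\bigr)z^3=0$. This is precisely the cancellation the paper uses on its left-hand side.

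You can close it with $a'=a$, $|a|=2$, $N=1$ and $t_1=([0],[a])$. The rule gives $p_2=2$ for one point and $p_2=3$ for $2[a]$, so $e^{c([a],[a])}=E_2(z/a)^2/E_3(z/a)^2=e^{-\frac23(z/a)^3}$, and
\[
 \Phi\bigl((t_1*t_2)\oplus(t_1*t_3)\bigr)-\Phi\bigl(t_1*(t_2\oplus t_3)\bigr)
 =E_2(z/a)\Bigl(1-\frac za\Bigr)\bigl(e^{\frac23(z/a)^3}-1\bigr)\not\equiv0 .
\]

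\textbf{Two smaller repairs.} First, once $|a|\ge2$ one has $\kappa_{[a]}=E_{p_k(1)}(z/a)\neq1-z/a$. So $t_2$ must be $([-H_{p_k(1)}(z/a)],[a])$, not $([0],[a])$. This is harmless, since your identity holds for arbitrary logarithmic coordinates. Second, the claim $E_p(z/a)^N=1+O(z^N)$ in Step~two is false; the correct expansion is $1-\frac{N}{p+1}(z/a)^{p+1}+\cdots$. That branch is not needed for the final argument.

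\textbf{Comparison with the paper.} The paper takes $f=z-2$ and $f+4=z+2$. Its left-hand twist vanishes by the antipodal cancellation $c([2],[-2])=0$, while $f*f$ keeps $e^{z^3/12}$. Your choice makes the sum the constant $1$. The left side then carries no twist without any coincidental cancellation, and no divisor of a sum has to be identified.

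Taking $t_1=t_2$ in your version with $a=2$ gives the difference $(1-z/2)^2(e^{z^3/12}-1)$. This is the paper's $(z-2)^2(e^{z^3/12}-1)$ divided by $4$: both counterexamples isolate the single nonzero discrepancy $c([2],[2])$. The paper's version is a fully explicit two-line computation. Yours makes transparent the general principle that $*$ fails to distribute as soon as one discrepancy $c(D,[a])$ is nonzero.
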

\begin{proof}
For shell $2$, the genus rule is $2$ for one point and $3$ for two
points counted with multiplicity.  For $L_a(z)=z-a$, $|a|=2$, the
logarithmic coordinate is
$[\log(-a)-z/a-z^2/(2a^2)]$.  Consequently
\[
 L_a*L_b=(z-a)(z-b)
       \exp\!\left(\frac{z^3}{3}(a^{-3}+b^{-3})\right),
\]
where the result is evaluated by $\Phi$.  Put $f=z-2$, so $f+4=z+2$.
Then
\[
 f*(f+4)=(z-2)(z+2),\qquad
 f*f+f*4=(z-2)^2e^{z^3/12}+4(z-2).
\]
The difference is $(z-2)^2(e^{z^3/12}-1)\not\equiv0$.
\end{proof}

\subsection{Algebraic corrections and their existence}
A compatible correction is an assignment $\beta:\DD\to\OO$ with
$\beta(D)(0)=0$ and
\begin{equation}\label{eq:beta-equation}
 \beta(D+E)=\beta(D)+\beta(E)+c(D,E).
\end{equation}
It makes $S(D)=e^{\beta(D)}\kappa_D$ multiplicative:
$S(D+E)=S(D)S(E)$.  Conversely, a normalized multiplicative section
supplies such a correction.  In particular $\beta(0)=0$ and
$\beta(-D)=-\beta(D)$.

\begin{proposition}[Algebraic existence]\label{prop:beta-exists}
A compatible correction exists for all locally finite signed divisors,
after an algebraic choice.
\end{proposition}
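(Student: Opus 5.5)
The plan is to read the compatible-correction equation \eqref{eq:beta-equation} as the statement that a symmetric $2$-cocycle is a coboundary. Then I will use the fact that the coefficient group is divisible, hence an injective $\Z$-module. The "algebraic choice" in the statement will be the Zorn's-lemma step hidden in Baer's criterion, or equivalently in the existence of a retraction.

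\textbf{Setting up the coefficient group and the extension.} Let $\mathcal O_0=\{h\in\OO:h(0)=0\}$. It is a complex vector space, so it is a divisible abelian group. By the normalization in \eqref{eq:cocycle}, every $c(D,E)$ lies in $\mathcal O_0$.

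On the set $\mathcal E=\mathcal O_0\times\DD$, define
\[
 (u,D)\oplus(v,E)=(u+v+c(D,E),\,D+E).
\]
The group axioms follow from the properties of $c$ recorded above:
\begin{itemize}
\item associativity is exactly the factor-set identity $c(D,E)+c(D+E,F)=c(E,F)+c(D,E+F)$;
\item commutativity is the symmetry of $c$;
\item $(0,0)$ is the identity because $c(D,0)=0$;
\item the inverse of $(u,D)$ is $(-u,-D)$, using $c(D,-D)=0$.
\end{itemize}
The inclusion $u\mapsto(u,0)$ is a homomorphism, since $c(0,0)=0$. The projection $\pi(u,D)=D$ is a surjective homomorphism with kernel $\mathcal O_0\times\{0\}$. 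So $\mathcal E$ is an abelian extension of $\DD$ by $\mathcal O_0$.

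\textbf{Splitting.} Since $\mathcal O_0$ is divisible, it is injective: Baer's criterion, proved via Zorn's lemma, extends homomorphisms into a divisible group. Therefore the identity of $\mathcal O_0$ extends to a homomorphism $r:\mathcal E\to\mathcal O_0$ with $r(u,0)=u$.

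Define $s(D)=(u,D)\oplus(-r(u,D),0)=(u-r(u,D),D)$.
\begin{itemize}
\item It does not depend on $u$, because $(u,D)=(u',D)\oplus(u-u',0)$ and $r$ restricts to the identity on the kernel.
\item It is a homomorphism $\DD\to\mathcal E$ with $\pi\circ s=\mathrm{id}$, since $s(D)=x-\iota(r(x))$ for any lift $x$ of $D$, and this expression is additive in $x$.
\end{itemize}
Write $s(D)=(\beta(D),D)$. Then $s(D+E)=s(D)\oplus s(E)$ reads exactly $\beta(D+E)=\beta(D)+\beta(E)+c(D,E)$. Also $\beta(D)(0)=0$ because $\beta(D)\in\mathcal O_0$. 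This gives the compatible correction.

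\textbf{Expected obstacle.} The only non-routine point is the injectivity step. I would either cite Baer's criterion directly or give the short Zorn argument: consider maximal partial retractions defined on subgroups containing $\mathcal O_0\times\{0\}$, and extend one element at a time using divisibility of $\mathcal O_0$.

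I would remark that this choice is genuinely non-constructive, which is what "after an algebraic choice" signals. No analytic properties of $\OO$ beyond its vector-space structure are used. The remaining checks are direct consequences of the identities for $c$ already stated:
\begin{itemize}
\item the well-definedness of $s$;
\item the fact that $\iota$ lands in $\mathcal O_0$.
\end{itemize}
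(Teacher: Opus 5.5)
Your proof is correct, but it follows a different route from the paper's main argument.

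\textbf{The paper's route.} The paper makes one linear choice: a $\C$-linear projection $\Pi:\MM\to\OO$ that fixes the entire functions. It then defines the correction by the explicit formula $\beta_\Pi(D)(z)=-\int_0^z\Pi(\kappa_D'/\kappa_D)(w)\dd w$. The cocycle equation follows by differentiating \eqref{eq:cocycle}. The combination $c(D,E)'$ of logarithmic derivatives is entire, so $\Pi$ fixes it, and $\beta_\Pi(D)(0)=0$ holds by construction.

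\textbf{Your route.} You use only the recorded properties of $c$: symmetry, the factor-set identity, $c(D,0)=c(D,-D)=0$, and vanishing at $0$. You build the twisted extension $\mathcal O_0\times\DD$ and split it using injectivity of the divisible group $\mathcal O_0$. Via $(u,D)\mapsto e^u\kappa_D$, your extension is the group of meromorphic functions with $\lc{0}{f}=1$. So your argument is essentially the paper's closing remark that $1\to\OO^\times\to\MM^\times\to\DD\to0$ splits, carried out in additive, normalized form. The advantage is that $\beta(D)(0)=0$ comes out automatically. You need no separate step rescaling a multiplicative section at $0$ and then taking logarithms.

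\textbf{What each buys.} The paper's version gives a concrete formula, linear in $\kappa_D'/\kappa_D$. If $\Pi$ is chosen to annihilate rational functions vanishing at infinity, $\beta_\Pi$ agrees with the explicit $\beta_0$ on finite divisors. You could get the same compatibility by first setting $r(u,D)=u-\beta_0(D)$ on the preimage of the finite divisors and then extending. Your version needs no derivatives or integrals, and it shows the statement to be a pure splitting of an abelian extension. Both use a choice principle to the same extent: a vector-space complement in the paper, Zorn's lemma inside Baer's criterion in yours.

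\textbf{A minor slip.} Your last bullet should say that $c$, and hence $\beta$, takes values in $\mathcal O_0$; $\iota$ lands in $\mathcal E$.
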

\begin{proof}
Choose a complex-linear projection $\Pi:\MM\to\OO$ fixing the entire
functions.  By extending vector-space bases it may also be chosen to
kill every rational function vanishing at infinity; this rational
subspace has zero intersection with $\OO$.
Set
\begin{equation}\label{eq:beta-projection}
 \beta_\Pi(D)(z)=-\int_0^z
       \Pi\!\left(\frac{\kappa_D'}{\kappa_D}\right)(w)\dd w.
\end{equation}
The integrand is entire, so the integral is path-independent.  On
differentiating \eqref{eq:cocycle},
\[
 c(D,E)'=\frac{\kappa_D'}{\kappa_D}
       +\frac{\kappa_E'}{\kappa_E}
       -\frac{\kappa_{D+E}'}{\kappa_{D+E}}.
\]
This combination is entire and is fixed by $\Pi$.  Thus the derivatives
of the two sides of \eqref{eq:beta-equation} agree, and both vanish at
zero before differentiation.  This proves the equation.
\end{proof}
The projection is an algebraic choice, not an algorithm continuous in
the divisor.  Each $\beta_\Pi(D)$ is nevertheless an entire function
of the plane variable.  Equivalently, the exact sequence
\[
 1\longrightarrow\OO^\times\longrightarrow\MM^\times
       \xrightarrow{\Div}\DD\longrightarrow0
\]
splits because $\OO^\times$ is divisible ($e^h$ has the root $e^{h/n}$)
and divisible abelian groups are injective \cite{Stacks}.

\subsection{A complete transported field}
Define
\begin{equation}\label{eq:Psi}
 \Psi_\beta(\zero)=0,\qquad
 \Psi_\beta([h],D)=e^{h+\beta(D)}\kappa_D=e^hS(D).
\end{equation}
This is a bijection to $\MM$, since it only shifts the logarithmic
coordinate in each divisor fiber.  Its multiplicative property is
\[
 \Psi_\beta(t*u)=\Psi_\beta(t)\Psi_\beta(u).
\]
Define addition by
\begin{equation}\label{eq:transport-add}
 t\oplus_\beta u=\Psi_\beta^{-1}
                  (\Psi_\beta(t)+\Psi_\beta(u)).
\end{equation}
Explicitly, put $F=\Psi_\beta(t)+\Psi_\beta(u)$.  If it is zero the
result is $\zero$.  Otherwise, with $D_3=\Div(F)$, the result is
\[
 \left([\log(F/\kappa_{D_3})-\beta(D_3)],D_3\right).
\]
The logarithm exists because its argument is an entire unit.

\begin{theorem}[Field and entire-function subring]\label{thm:field}
The structure $(\TT,\oplus_\beta,*)$ is a field isomorphic to $\MM$ by
$\Psi_\beta$.  Its subset
\[
 \mathscr A=\{\zero\}\cup\{([h],Z,\varnothing)\}
\]
is a subring corresponding to $\OO$, and its field of fractions is
$\TT$.
\end{theorem}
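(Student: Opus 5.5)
The plan is to transport everything through the bijection $\Psi_\beta$ and check that it respects both operations. First, $\Psi_\beta$ is a bijection $\TT\to\MM$. Proposition~\ref{prop:coordinates} says $\Phi$ is a bijection, and $\Psi_\beta$ differs from $\Phi$ only by the shift $[h]\mapsto[h+\beta(D)]$ inside each divisor fiber $\HH\times\{D\}$. That shift is a bijection of $\HH$, well defined because $\beta(D)$ is a genuine entire function. Second, I would verify that $\Psi_\beta$ is multiplicative for $*$. For nonzero arguments,
\[
\Psi_\beta(([h],D)*([g],E))=e^{h+g+\beta(D+E)}\kappa_{D+E}.
\]
By \eqref{eq:beta-equation} and \eqref{eq:cocycle}, the right side equals $e^{h+\beta(D)}\kappa_D\,e^{g+\beta(E)}\kappa_E$. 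Any product with $\zero$ maps to $0$. Third, $\oplus_\beta$ is defined in \eqref{eq:transport-add} precisely as the transport of ordinary addition, so $\Psi_\beta$ is additive by construction. The explicit formula for $\oplus_\beta$ only needs to be checked as a correct description of $\Psi_\beta^{-1}$. This holds because $\Psi_\beta([\log(F/\kappa_{D_3})-\beta(D_3)],D_3)=F$, and $F/\kappa_{D_3}$ is an entire unit with an entire logarithm, as in Proposition~\ref{prop:coordinates}.

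Since $\Psi_\beta$ is a bijection preserving addition and multiplication onto the field $\MM$, all the field axioms pull back, and $\Psi_\beta$ is a field isomorphism. The neutral elements are $\zero$ and $([0],0)$, because $\beta(0)=0$ and $\kappa_0=1$ give $\Psi_\beta([0],0)=1$. Additive inverses come from $\Psi_\beta^{-1}(-\Psi_\beta(t))$. Multiplicative inverses are $([-h],-D)$, using $\beta(-D)=-\beta(D)$.

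For the subring $\mathscr A$, I would show that $\Psi_\beta(\mathscr A)=\OO$. A nonzero meromorphic function is entire exactly when its divisor is effective, i.e.\ $P=\varnothing$ in the reduced triple. The shift by $\beta(D)$ leaves the divisor coordinate unchanged, so $\Psi_\beta$ maps the triples $([h],Z,\varnothing)$ onto the nonzero entire functions. Then $\mathscr A=\Psi_\beta^{-1}(\OO)$ is a subring, since it is the preimage of a subring under a ring isomorphism. Its field of fractions inside $\TT$ corresponds to the fraction field of $\OO$ inside $\MM$. That fraction field is all of $\MM$, because every meromorphic function is a quotient of entire functions. Concretely, $q_t=(e^{h}p_Z)/p_P$ with $p_Z,p_P$ entire by Proposition~\ref{prop:coordinates}. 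Hence the fraction field of $\mathscr A$ is $\TT$. One could also see this directly in coordinates: $([h],Z-P)=([h],Z)*([0],-P)$ up to the $\beta$ bookkeeping, since $([0],P)\in\mathscr A$ has $*$-inverse $([0],-P)$.

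I expect the main obstacle to be bookkeeping rather than substance. One has to use the normalization $\beta(D)(0)=0$ and the cocycle relation consistently, so that multiplicativity holds exactly and not merely modulo $2\pi\ii\Z$. One also has to confirm that the zero element $\zero$ is handled separately in $*$ and in $\oplus_\beta$, so that no case with $F\equiv0$ escapes. The existence of $\beta$ itself is already supplied by Proposition~\ref{prop:beta-exists}, so I would simply fix such a $\beta$ at the start.
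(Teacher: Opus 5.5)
Your proposal is correct and follows essentially the same route as the paper. Both arguments transport every field axiom through the bijection $\Psi_\beta$, using its multiplicativity for $*$ and the definition of $\oplus_\beta$; identify $\mathscr A$ with $\Psi_\beta^{-1}(\OO)$ because $\Psi_\beta$ preserves divisors; and exhibit each nonzero $([h],Z-P)$ as the $*$-quotient of $([h],Z)$ by $([0],P)$. Your extra explicit checks (the cocycle computation and the formula for $\Psi_\beta^{-1}$) are fine, and the hedge ``up to the $\beta$ bookkeeping'' is unnecessary: the field multiplication is $*$ itself, so $([h],Z)*([0],-P)=([h],Z-P)$ holds exactly.
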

\begin{proof}
Associativity and commutativity of addition and its zero element are
transported from $\MM$ through the bijection.  The additive inverse of
$([h],D)$ is $([h+\pi\ii],D)$.  The multiplication identity is
$([0],0)$, and every nonzero element has inverse $([-h],-D)$.
For distributivity,
\begin{align*}
 \Psi_\beta(t*(u\oplus_\beta v))
 &=\Psi_\beta(t)\bigl(\Psi_\beta(u)+\Psi_\beta(v)\bigr)\\
 &=\Psi_\beta((t*u)\oplus_\beta(t*v)).
\end{align*}
Injectivity proves the law.  All other ring axioms follow in the same
way, or from the direct coordinate multiplication already established.

The map $\Psi_\beta$ preserves divisors, so a triple has no poles
exactly when its image is entire.  Therefore $\mathscr A$ is a subring.
Every nonzero $([h],Z,P)$ is the $*$-quotient of
$([h],Z,\varnothing)$ by $([0],P,\varnothing)$.  This proves the
fraction-field assertion.
\end{proof}
Different corrections give isomorphic presentations: the map
\[
 ([h],D)\longmapsto([h+\beta_1(D)-\beta_2(D)],D)
\]
intertwines the two evaluations.  This is not a new abstract field.
Importantly, the evaluation used here is $\Psi_\beta$, whereas the
DCT applications use $\Phi(t)=q_t$.  The transported addition is not
needed to prove any of those identities.

\subsection{Explicit corrections do not automatically give additive closure}
For fixed $m$, the summability group
\[
 \mathcal D_m=\left\{D:\sum_{a\ne0}|d(a)||a|^{-(m+1)}<\infty\right\}
\]
has the explicit correction \eqref{eq:beta-m}; its fixed-genus section
$s_m$ is multiplicative because its genus is independent of $D$.
Finite divisors admit in particular
\[
 \beta_0(D)=-\sum_{a\ne0}d(a)H_{g_D(a)}(z/a),
 \qquad s_0(D)=z^{d(0)}\prod_{a\ne0}(1-z/a)^{d(a)}.
\]
However, allowing arbitrary entire $h$ while restricting only
$D\in\mathcal D_m$ does not give a field under transported addition.
The zero-free functions $1$ and $e^{z^{m+1}}$ are both admitted, while
\[
 1+e^{z^{m+1}}=0\quad\Longleftrightarrow\quad
 z^{m+1}=(2k+1)\pi\ii,\qquad k\in\Z.
\]
Its simple zero multiset has divergent sum of $|a|^{-(m+1)}$ and is not
in $\mathcal D_m$.

A genuine restricted field can instead be obtained by pulling back an
already additively closed field, for example $\KK_\rho$ with $\rho<m+1$,
through $\Psi_m([h],D)=e^h s_m(D)$.  Its divisors lie in
$\mathcal D_m$ by zero-counting estimates, and the pullback restricts
$h$ and $D$ together.  Merely imposing a divisor summability condition
is not the same restriction.  These algebraic questions explain the
representation's scope; they do not alter the four-step DCT method.

\clearpage

\section*{Disclosure of AI assistance}
\addcontentsline{toc}{section}{Disclosure of AI assistance}
OpenAI ChatGPT was used as an assistance tool for drafting, algebraic checks, and locating related literature.  
The central mathematical ideas and the decision to organize the paper around fixed Weierstrass coordinates and divisor-comparison theorems are the author's. 
The author reviewed, revised, and takes responsibility for the mathematical statements, arguments, citations, and final contents of the manuscript.  ChatGPT is not listed as an author.

\phantomsection
\end{document}